\newtheorem{theorem}{Theorem}[section]
\newtheorem{corollary}[theorem]{Corollary}
\newtheorem{lemma}[theorem]{Lemma}
\newtheorem{proposition}[theorem]{Proposition}
\newtheorem{remark}[theorem]{Remark}
\theoremstyle{definition}
\newtheorem{definition}[theorem]{Definition}
\newtheorem{example}[theorem]{Example}
\theoremstyle{remark}
\numberwithin{figure}{section}
\numberwithin{table}{section}
\newcommand*\acknowledgment[1]{%
	\begingroup\noindent
	\rightskip\leftskip
	\begin{flushleft}\textbf{\large Acknowledgment.}\, #1%
		\par\vspace*{1mm}\end{flushleft}\endgroup}
\begin{document}

\title[TOPOLOGICAL GROUP CONSTRUCTION IN PROXIMITY AND DESCRIPTIVE PROXIMITY SPACES]{TOPOLOGICAL GROUPS IN PROXIMITY AND DESCRIPTIVE PROXIMITY SPACES}

\author{MEL\.{I}H \.{I}S}
\date{\today}

\address{\textsc{Melih Is,}
Ege University\\
Faculty of Sciences\\
Department of Mathematics\\
Izmir, Turkey}
\email{melih.is@ege.edu.tr}

\subjclass[2010]{54H05 ,54E17 ,22A20 ,22A05 ,54E05}

\keywords{Descriptive proximity, proximity, topological groups}

\begin{abstract}
 This paper aims to examine the version of the topological group structure in proximity and especially descriptive proximity spaces, that is, the concepts of proximal group and descriptive proximal group are introduced. In addition, the concepts of homomorphism and isomorphism, which give important results in group theory, are discussed by interpreting the concepts of continuity in the theory of (descriptive) proximity.
\end{abstract}

\maketitle

\section{Introduction}
\label{intro}
\quad Topology is concerned with the study of properties preserved under continuous transformations, capturing the concept of nearness between elements of a set. Over the years, various approaches to topological spaces have been explored, each offering unique perspectives on the fundamental notions of continuity and proximity \cite{Riesz:1908,Wallace:1941,Smirnov:1952,Efremovic:1952,Lodato:1964,Cech:1966,NaimpallyPeters:2013}. One such approach that has gained significant attention is nearness theory, which provides an alternative framework for analyzing topological structures through the concept of descriptive proximity \cite{Peters:2013}.

\quad In nearness theory, the traditional notion of open sets is replaced by a more intuitive concept of near sets, characterized by a binary relation that describes the qualitative closeness between elements in a set. This approach introduces the notion of proximity spaces, which generalize the concept of metric spaces and provide a deeper understanding of the relationships between points based on qualitative descriptions rather than precise distances. Furthermore, in descriptive proximity spaces, proximity relations are tailored to specific features or characteristics, making them particularly suitable for applications in areas such as data analysis and pattern recognition \cite{NaimpallyPeters:2013,Peters:2014,Peters:2017}.

\quad The aim of this article is to explore the construction of topological groups within the context of nearness theory, with a specific focus on proximity and descriptive proximity spaces. As a slightly different concept, the studies \cite{MaheswariGnana:2022,MaheswariGnana:2023,InanUckun:2023} combine the ideas of topological space and near groups in order to define topological (or semitopological) near groups on a nearness approximation space and investigate their features such as group homomorphism of these groups. Topological groups, which combine algebraic and topological structures, offer a natural setting for studying the interplay between group operations and continuous mappings \cite{Husain:1966}. By leveraging the concepts of proximity and descriptive proximity, we seek to investigate the topological properties of these groups and the implications they hold for the overall structure of the underlying space.

\quad In Section \ref{sec:1}, we provide a concise overview of the fundamental concepts and definitions in nearness theory, establishing the groundwork for the subsequent discussions. This includes introducing the concept of proximity relations and their axiomatic properties, as well as delving into the qualitative nature of descriptive proximity relations. The main part of this article is presenting the construction of topological groups in proximity and descriptive proximity spaces. We explore the compatibility of group operations with nearness structures, investigating the behavior of near sets under group multiplication and inversion. We also give interesting examples in terms of the properties of (descriptive) proximal groups. One of the important results explicitly investigates a homomorphism, or more strongly an isomorphism, between proximal groups. Furthermore, there are important implications about the proximal group setting of isomorphism theorems of groups. Section \ref{sec:3} is dedicated to introducing the concept of descriptive proximal groups. Along with exciting examples in this section, we clearly state that our investigation is not only of theoretical interest but also holds practical implications. Topological groups constructed within  descriptive proximity spaces have the potential to find applications in diverse fields, ranging from data analysis and pattern recognition to the study of social networks and cognitive sciences.

\quad In summary, this article endeavors to contribute to the burgeoning field of nearness theory by exploring the topological group construction within proximity and descriptive proximity spaces. By offering a fresh perspective on the interplay between group structures and nearness relations, we aim to enrich the understanding of topological properties in nearness-based settings and open up new avenues for future research. 

\section{Preliminaries}
\label{sec:1}
\quad In this section, we simply state informative facts about proximity and descriptive proximity spaces. These facts will be frequently used in Section \ref{sec:2} and \ref{sec:3}.

\quad We first start with presenting the definition of proximity spaces with respect to Lodato, \v{C}ech, and Efremovi\v{c}.

\begin{definition}\cite{Lodato:1964}
	Given a nonempty space $Y$, a relation $\delta$ on $2^{Y}$ is said to be a \textit{Lodato proximity} provided that the properties
	\begin{itemize}
		\item[\textbf{L1.}] $B_{1} \ \delta \ B_{2}$ implies $B_{2} \ \delta \ B_{1}$.
		\item[\textbf{L2.}] $B_{1} \ \delta \ B_{2}$ implies that $B_{1}$ and $B_{2}$ are nonempty.
		\item[\textbf{L3.}] That the intersection of $B_{1}$ and $B_{2}$ is nonempty implies $B_{1} \ \delta \ B_{2}$.
		\item[\textbf{L4.}] $B_{1} \ \delta \ (B_{2} \cup B_{3})$ if and only if $B_{1} \ \delta \ B_{2}$ or $B_{1} \ \delta \ B_{3}$.
		\item[\textbf{L5.}] For each $b_{2} \in B_{2}$, $B_{1} \ \delta \ B_{2}$ and $\{b_{2}\} \ \delta \ B_{3}$ imply that $B_{1} \ \delta \ B_{3}$.
	\end{itemize}
    hold for all subsets $B_{1}$, $B_{2}$, and $B_{3}$ in $Y$.
\end{definition}

\quad $B_{1} \ \delta \ B_{2}$ is interpreted as "$B_{1}$ is near $B_{2}$", whereas $B_{1} \ \underline{\delta} \ B_{2}$ is read as "$B_{1}$ is far from $B_{2}$". Another definiton of the proximity is given by E. \v{C}ech \cite{Cech:1966}: The relation $\delta$ on $2^{Y}$ is said to be a \textit{\v{C}ech proximity} if the properties $L1-L4$ hold. In addition, $\delta$ is called an \textit{Efremovi\v{c} Proximity} \cite{Efremovic:1952} provided that the properties of \v{C}ech proximity ($L1-L4$) satisfy and an extra condition
\begin{itemize}
	\item[\textbf{EF}] $B_{1} \ \underline{\delta} \ B_{2}$ implies that there exists a subset $K$ of $Y$ such that $B_{1} \ \underline{\delta} \ K$ and $(Y - K) \ \underline{\delta} \ B_{2}$
\end{itemize}
holds.

\quad In this paper, if we want to emphasize the Lodato proximity relation or \v{C}ech proximity relation, we use the expression L-proximity or C-proximity for short, respectively. Unless otherwise emphasized, the simple expression $\delta$ refers to Efremovi\v{c} proximity. Therefore, $(Y,\delta)$ is called a \textit{proximity space} and simply denoted by pspc. Efremovi\v{c} proximity is stronger than L-proximity or C-proximity. The \textit{discrete proximity} $(Y,\delta)$, one of the basic proximity examples, is given by $B_{1} \ \delta \ B_{2}$ if and only if $B_{1} \cap B_{2} \neq \emptyset$ for all $B_{1}$, $B_{2} \in 2^{Y}$ \cite{NaimpallyWarrack:1970}. 

\quad The set of all points in $Y$ that are near $B_{1}$, which is the set \[B_{1}^{\delta} = \{y \in Y : \{y\} \ \delta \ B_{1}\},\] is known as the closure of a subset $B_{1}$, indicated by the symbol cl$B_{1}$ \cite{NaimpallyWarrack:1970}. Mathematically, one has $B_{1}^{\delta} =$ cl$B_{1}$. Then, by considering Kuratowski closure axioms \cite{Kuratowski:1958}, a topology $\tau(\delta)$ can be associated with the pspc $(Y,\delta)$.

\quad In proximity spaces, the term continuity is defined using the proximity relation instead of open sets. Explicitly, a function $k : (Y_{1},\delta_{1}) \rightarrow (Y_{2},\delta_{2})$ between two pspcs is considered continuous (we generally say \textit{proximally continuous} and simply denoted by pcont) if it preserves proximity; that is, for any subsets $B_{1}$ and $B_{2}$ of $Y$, if $B_{1}$ is near $B_{2}$ with respect to $\delta_{1}$, then $k(B_{1})$ is near $k(B_{2})$ with respect to $\delta_{2}$ \cite{Efremovic:1952,Smirnov:1952}. If $k_{1}$ and $k_{2}$ are two pcont maps, then so is their composition $k_{1} \circ k_{2}$ \cite{NaimpallyWarrack:1970}. A pcont map $k : (Y_{1},\delta_{1}) \rightarrow (Y_{2},\delta_{2})$ is called a \textit{proximal isomorphism} if the inverse map $k^{-1} : (Y_{2},\delta_{2}) \rightarrow (Y_{1},\delta_{1})$ is also pcont \cite{NaimpallyWarrack:1970}.

\quad When one has two pspcs $(Y_{1},\delta_{1})$ and $(Y_{2},\delta_{2})$, it is possible to obtain a new pspc $(Y_{1} \times Y_{2},\delta)$ by the cartesian product of them. The \textit{cartesian product proximity relation} $\delta$ is given as follows \cite{Leader:1964}. For any $(B_{1} \times B_{2})$, $(C_{1} \times C_{2}) \in 2^{Y_{1} \times Y_{2}}$, $(B_{1} \times B_{2}) \ \delta \ (C_{1} \times C_{2})$ if and only if $B_{1} \ \delta_{1} \ C_{1}$ and $B_{2} \ \delta_{2} \ C_{2}$. Assume that $(Y,\delta)$ is a pspc and $V$ is a subset of $Y$. Another new proximity $\delta_{V}$, called an \textit{induced or subspace proximity}, is defined by $B_{1} \ \delta_{V} \ B_{2}$ if and only if $B_{1} \ \delta \ B_{2}$ for all $B_{1}$, $B_{2} \in 2^{V}$ \cite{NaimpallyWarrack:1970}. 

\quad The isomorphism theorems are fundamental results in group theory that describe the relationship between groups and their subgroups, as well as the structure of factor groups. They are also powerful tools in group theory and help us understand the structural aspects of groups, especially when dealing with homomorphisms and factor groups. They provide valuable insights into the relationship between groups and their quotients, which allows us to analyze the structures of groups more effectively. Recall that, in a group $(G_{1},\cdot)$ with a normal subgroup $N_{1} \subseteq G_{1}$, $G_{1}/N_{1}$ is defined as the set $\{aN_{1} : a \in G_{1}\}$.

\begin{theorem}\cite{Hungerford:1974} \label{teo3}
	\textbf{i)} Assume that $\beta : G_{1} \rightarrow H_{1}$ is a homomorphism of groups, the kernel of $\beta$, given by Ker$(\beta) = \{g_{1} \in G_{1} : \beta(g_{1}) = e_{H_{1}}\} \subseteq G_{1}$, is a normal subgroup, and $\beta(G_{1}) \subseteq H_{1}$ is a subgroup. Then $G_{1} / \text{Ker}(\beta)$ is isomorphic to Im$(\beta)$.
	
	\textbf{ii)} Given a group $G_{1}$, and subgroups $H_{1}$ and $N_{1}$ of $G$ with $N_{1}$ being a normal subgroup of $G_{1}$, we have that $H_{1}N_{1} \subseteq G_{1}$ is a subgroup, $N_{1} \subseteq H_{1}N_{1}$ is a normal subgroup, and the intersection $H_{1} \cap N_{1} \subseteq H_{1}$ is a normal subgroup. Then $H_{1}N_{1}/N_{1}$ is isomorphic to $H_{1}/(H_{1} \cap N_{1})$.  
	
	\textbf{iii)} Assume that $G_{1}$ is a group, and 
	$N_{1}$ and $K_{1}$ are normal subgroups of $G_{1}$ with $N_{1} \subseteq K_{1}$. Then $(G_{1}/N_{1}) / (K_{1}/N_{1})$ is isomorphic to $G_{1}/K_{1}$.
\end{theorem}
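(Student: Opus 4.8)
The plan is to prove part \textbf{(i)} from first principles and then obtain parts \textbf{(ii)} and \textbf{(iii)} as direct consequences of it, which is the standard economical route. For \textbf{(i)}, I would define the induced map $\overline{\beta} : G_{1}/\text{Ker}(\beta) \rightarrow \text{Im}(\beta)$ by $\overline{\beta}(g_{1}\,\text{Ker}(\beta)) = \beta(g_{1})$. The first thing to check is that this is well defined: if $g_{1}\,\text{Ker}(\beta) = g_{1}'\,\text{Ker}(\beta)$, then $g_{1}^{-1}g_{1}' \in \text{Ker}(\beta)$, so $\beta(g_{1}) = \beta(g_{1}')$. Then $\overline{\beta}$ is a homomorphism because $\beta$ is one, it is surjective onto $\text{Im}(\beta)$ by construction, and it is injective because $\overline{\beta}(g_{1}\,\text{Ker}(\beta)) = e_{H_{1}}$ forces $g_{1} \in \text{Ker}(\beta)$, i.e. the coset is the identity of the quotient. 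Hence $\overline{\beta}$ is an isomorphism.

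For \textbf{(ii)}, I would apply \textbf{(i)} to the homomorphism $\varphi : H_{1} \rightarrow H_{1}N_{1}/N_{1}$ given by $\varphi(h_{1}) = h_{1}N_{1}$, that is, the composite of the inclusion $H_{1} \hookrightarrow H_{1}N_{1}$ with the canonical projection $H_{1}N_{1} \rightarrow H_{1}N_{1}/N_{1}$. Every coset in $H_{1}N_{1}/N_{1}$ has the form $h_{1}n_{1}N_{1} = h_{1}N_{1}$ with $h_{1} \in H_{1}$ and $n_{1} \in N_{1}$, so $\varphi$ is surjective; and $\text{Ker}(\varphi) = \{h_{1} \in H_{1} : h_{1} \in N_{1}\} = H_{1} \cap N_{1}$. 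Part \textbf{(i)} then yields $H_{1}/(H_{1} \cap N_{1}) \cong H_{1}N_{1}/N_{1}$. For \textbf{(iii)}, I would apply \textbf{(i)} to the map $\psi : G_{1}/N_{1} \rightarrow G_{1}/K_{1}$ defined by $\psi(g_{1}N_{1}) = g_{1}K_{1}$, which is well defined precisely because $N_{1} \subseteq K_{1}$, and is plainly a surjective homomorphism with $\text{Ker}(\psi) = \{g_{1}N_{1} : g_{1} \in K_{1}\} = K_{1}/N_{1}$; part \textbf{(i)} then gives $(G_{1}/N_{1})/(K_{1}/N_{1}) \cong G_{1}/K_{1}$.

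The structural assertions collected in the statement (that $\text{Ker}(\beta)$ is normal in $G_{1}$, that $\text{Im}(\beta)$ and $H_{1}N_{1}$ are subgroups, that $N_{1}$ is normal in $H_{1}N_{1}$, and that $H_{1} \cap N_{1}$ is normal in $H_{1}$) are routine verifications that I would dispatch first so that all the quotients above make sense. The only place that needs genuine — though still elementary — attention is the well-definedness of the three maps $\overline{\beta}$, $\varphi$, and $\psi$; everything else is formal. Since this is a classical result, in practice I would simply cite a standard reference such as \cite{Hungerford:1974}, but the argument above is the self-contained proof, with \textbf{(i)} doing all the work and \textbf{(ii)}, \textbf{(iii)} following by choosing the right map to feed into it.
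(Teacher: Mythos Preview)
Your argument is correct and is exactly the standard treatment of the three isomorphism theorems. The paper itself gives no proof at all for this statement: it is placed in the Preliminaries, attributed to \cite{Hungerford:1974}, and used only as background for the later discussion of whether the analogues hold for proximal groups. So your closing remark---that in practice one would simply cite Hungerford---is precisely what the paper does; your self-contained write-up goes well beyond what the paper provides.
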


\quad In Theorem \ref{teo3}, $\textbf{i)}$, $\textbf{ii)}$, and $\textbf{iii)}$ are generally referred to \textit{First Isomorphism Theorem}, \textit{Second Isomorphism Theorem}, and \textit{Third Isomorphism Theorem}, respectively.

\quad A descriptive proximity relation, generally denoted by $\delta_{\Phi}$, on a nonempty set $Y$ is a binary relation that captures the concept of nearness or closeness between elements in the set \cite{Peters1:2007,Peters2:2007,Peters:2013}. It provides a qualitative way to compare how close or similar two elements are to each other, without involving precise distance measurements as in metric spaces.

\quad Consider the nonempty set $Y$ with any element (object) $y \in Y$. For each $j \in J$, $\phi_{j}$ is a function from $Y$ to real numbers and takes any element $y$ to the feature value of it. The set of probe functions is denoted by $\Phi = \displaystyle \{\phi_{j}\}_{j \in J}$. An object's description can be found in a feature vector $\Phi$. For any subsets $B_{1}$, $B_{2} \in 2^{Y}$, $B_{1} \ \delta_{\Phi} \ B_{2}$ if and only if $\Phi(B_{1}) \cap \Phi(B_{2}) \neq \emptyset$, where $\Phi(C_{1})$ is given by the sets $\{\Phi(c_{1}) : c_{1} \in C_{1}\}$. Here $B_{1} \ \delta_{\Phi} \ B_{2}$ means that $B_{1}$ is \textit{descriptively near} $B_{2}$ (similarly, $B_{1} \ \underline{\delta_{\Phi}} \ B_{2}$ is used to say $B_{1}$ is \textit{descriptively far from} $B_{2}$) and $\delta_{\Phi}$ is called a \textit{descriptive proximity relation} on the subsets of $Y$. The \textit{descriptive intersection} for the subsets $B_{1}$ and $B_{2}$ of $Y$ is defined by $\{b \in B_{1} \cup B_{2} : \Phi(b) \ \text{belongs to} \ \Phi(B_{1}) \cap \Phi(B_{2})\}$ and generally denoted by $\displaystyle B_{1} \bigcap_{\Phi} B_{2}$.

\begin{definition}\cite{DiGuadagPeterRaman:2018}
	Let $Y$ be a nonempty space. Then a relation $\delta_{\Phi}$ is said to be a \textit{descriptive Lodato proximity} provided that the properties
	\begin{itemize}
		\item[\textbf{DL1.}] $B_{1} \ \delta_{\Phi} \ B_{2}$ implies $B_{2} \ \delta_{\Phi} \ B_{1}$.
		\item[\textbf{DL2.}] $B_{1} \ \underline{\delta_{\phi}} \ \emptyset$ for all $B_{1}$ in $2^{Y}$.
		\item[\textbf{DL3.}] That the descriptive intersection of $B_{1}$ and $B_{2}$ is nonempty implies $B_{1} \ \delta_{\Phi} \ B_{2}$.
		\item[\textbf{DL4.}] $B_{1} \ \delta_{\Phi} \ (B_{2} \cup B_{3})$ if and only if $B_{1} \ \delta_{\Phi} \ B_{2}$ or $B_{1} \ \delta_{\Phi} \ B_{3}$.
		\item[\textbf{DL5.}] For each $b_{2} \in B_{2}$, $B_{1} \ \delta_{\Phi} \ B_{2}$ and $\{b_{2}\} \ \delta_{\Phi} \ B_{3}$ imply that $B_{1} \ \delta_{\Phi} \ B_{3}$.
	\end{itemize}
	hold for all subsets $B_{1}$, $B_{2}$, and $B_{3}$ in $Y$.
\end{definition}

\quad The relation $\delta_{\Phi}$ on $2^{Y}$ is said to be a \textit{descriptive Efremovi\v{c} proximity} if the properties $DL1-DL4$ hold, and in addition,
\begin{itemize}
	\item[\textbf{DEF}] $B_{1} \ \underline{\delta_{\Phi}} \ B_{2}$ implies that there exists a subset $K$ of $Y$ such that $B_{1} \ \underline{\delta_{\Phi}} \ K$ and $(Y - K) \ \underline{\delta_{\Phi}} \ B_{2}$
\end{itemize}
satisfies. 

\quad $(Y,\delta_{\Phi})$ is called a \textit{descriptive proximity space} and simply denoted by dpspc. A function $k : (Y_{1},\delta_{\Phi_{1}}) \rightarrow (Y_{2},\delta_{\Phi_{2}})$ between two dpspcs is considered continuous (we generally say \textit{descriptive proximally continuous} and simply denoted by dpcont) if it preserves descriptive proximity; that is, for any subsets $B_{1}$ and $B_{2}$ of $Y$, if $B_{1}$ is descriptively near $B_{2}$ with respect to $\delta_{\Phi_{1}}$, then $k(B_{1})$ is descriptively near $k(B_{2})$ with respect to $\delta_{\Phi_{2}}$ \cite{PetersTane:2021}. If $k_{1}$ and $k_{2}$ are two dpcont maps, then so is their composition. A dpcont map $k : (Y_{1},\delta_{\Phi_{1}}) \rightarrow (Y_{2},\delta_{\Phi_{2}})$ is called a \textit{descriptive proximal isomorphism} if the inverse map $k^{-1} : (Y_{2},\delta_{\Phi_{2}}) \rightarrow (Y_{1},\delta_{\Phi_{1}})$ is also dpcont \cite{PetersTane:2021}.

\quad Let $(Y_{1},\delta_{\Phi_{1}})$ and $(Y_{2},\delta_{\Phi_{2}})$ be any dpspcs. Then their cartesian product $Y_{1} \times Y_{2}$ admits a \textit{cartesian product descriptive proximity relation} $\delta_{\Phi}$ defined as follows \cite{PetersTane2:2022}. For any $(B_{1} \times B_{2})$, $(C_{1} \times C_{2}) \in 2^{Y_{1} \times Y_{2}}$, $(B_{1} \times B_{2}) \ \delta_{\Phi} \ (C_{1} \times C_{2})$ if and only if $B_{1} \ \delta_{\Phi_{1}} \ C_{1}$ and $B_{2} \ \delta_{\Phi_{2}} \ C_{2}$. Assume that $(Y,\delta_{\Phi})$ is a dpspc and $V$ is a subset of $Y$. A \textit{descriptive induced (or subspace) proximity}, denoted by $\delta_{\Phi_{V}}$ is defined by $B_{1} \ \delta_{\Phi_{V}} \ B_{2}$ if and only if $B_{1} \ \delta_{\Phi} \ B_{2}$ for all $B_{1}$, $B_{2} \in 2^{V}$. 

\quad Given two dpspcs, $(Y_{1},\delta^{1}_{\Phi})$ and $(Y_{2},\delta^{2}_{\Phi})$, the descriptive proximal mapping space $Y_{2}^{Y_{1}}$ is defined as the set $\{\beta : Y_{1} \rightarrow Y_{2} \ | \ \beta \ \text{is a dpcont-map}\}$ having the following descriptive proximity relation $\delta_{\Phi}$ on itself \cite{MelihKaraca:2023}: Let $B_{1}$, $B_{2} \subseteq Y$ and $\{\gamma_{j}\}_{j \in J}$ and $\{\gamma^{'}_{k}\}_{k \in K}$ be any subsets of dpcont maps in $Y_{2}^{Y_{1}}$. We say that $\{\gamma_{j}\}_{j \in J} \ \delta_{\Phi} \ \{\gamma^{'}_{k}\}_{k \in K}$ provided that $B_{1} \ \delta^{1}_{\Phi} \ B_{2}$ implies that $\gamma_{j}(B_{1}) \ \delta^{2}_{\Phi} \ \gamma^{'}_{k}(B_{2})$ for all $j$ and $k$.

\section{Proximal Groups}
\label{sec:2}

\begin{definition}\label{def1}
	Let $\delta$ and $\cdot$ be a proximity and a group operation on a set $G_{1}$, respectively. Then $(G_{1},\delta,\cdot)$ is said to be a proximal group when
	\begin{eqnarray*}
		\mu_{1} : G_{1} \times G_{1} \rightarrow G_{1},
	\end{eqnarray*} 
    defined by $\mu_{1}(g_{1},g_{1}^{'}) = g_{1}\cdot g_{1}^{'}$ for any $g_{1}$, $g_{1}^{'} \in G_{1}$,
    and
    \begin{eqnarray*}
    	\mu_{2} : G_{1} \rightarrow G_{1},
    \end{eqnarray*} 
    defined by $\mu_{2}(g_{1}) = g_{1}^{-1}$ for any $g_{1} \in G_{1}$, are pcont maps.
\end{definition}

\quad Recall that for any subsets $B_{1}$, $B_{2}$ and $B_{3}$ of a topological group $G_{1}$, $B_{1}^{-1}$ and $B_{2}\cdot B_{3}$ are given by $\{b_{1}^{-1} : b_{1} \in B_{1}\}$ and $\{b_{2}\cdot b_{3} \ | \ b_{2} \in B_{2}, \ b_{3} \in B_{3}\}$, respectively. 

\begin{example}\label{exm1}
	Consider $\mathbb{R}-\{0\}$ with a proximity $\delta$, defined by \[B_{1} \ \delta \ B_{2} \Leftrightarrow B_{1} \cap B_{2} \neq \emptyset\] for any subsets $B_{1}$, $B_{2}$ in $\mathbb{R}-\{0\}$, and the group operation $\cdot$ for the subsets of $\mathbb{R}-\{0\}$. Then we shall show that $(\mathbb{R}-\{0\},\delta,\cdot)$ is a proximal group. Define the maps
	\begin{eqnarray*}
		\mu_{1} : \mathbb{R}-\{0\} \times \mathbb{R}-\{0\} \rightarrow \mathbb{R}-\{0\} \hspace*{0.5cm} \text{and} \hspace*{0.5cm} \mu_{2} : \mathbb{R}-\{0\} \rightarrow \mathbb{R}-\{0\}
	\end{eqnarray*} 
	with $\mu_{1}(g_{1},g_{1}^{'}) = g_{1}\cdot g_{1}^{'}$ for any $g_{1}$, $g_{1}^{'} \in \mathbb{R}-\{0\}$, and, $\mu_{2}(g_{1}) = g_{1}^{-1}$ for any $g_{1} \in \mathbb{R}-\{0\}$, respectively. First, $\mu_{1}$ is pcont. Indeed, for any subsets $B_{1} \times B_{2}$, $C_{1} \times C_{2} \in \mathbb{R}-\{0\} \times \mathbb{R}-\{0\}$, the fact $B_{1} \times B_{2}$ is near $C_{1} \times C_{2}$ implies that $B_{1}$ is near $C_{1}$ and $B_{2}$ is near $C_{2}$. This means that $B_{1} \cap C_{1} \neq \emptyset$ and $B_{2} \cap C_{2} \neq \emptyset$, respectively. Therefore, there exist $x_{1}$, $x_{2} \in \mathbb{R}-\{0\}$ such that $x_{1} \in B_{1}$, $x_{1} \in C_{1}$, $x_{2} \in B_{2}$, and $x_{2} \in C_{2}$. Since $x_{1}\cdot x_{2}$ belongs to both $B_{1}\cdot B_{2}$ and $C_{1}\cdot C_{2}$, we find that $(B_{1}\cdot B_{2}) \cap (C_{1}\cdot C_{2}) \neq \emptyset$, which says that $(B_{1}\cdot B_{2}) \ \delta \ (C_{1}\cdot C_{2})$. Next, we claim that $\mu_{2}$ is pcont. Let $B_{1}$ and $B_{2}$ be any subsets of $\mathbb{R}-\{0\}$ such that $B_{1} \ \delta \ B_{2}$. Then $B_{1} \cap B_{2} \neq \emptyset$, that is, there exists $x_{1} \in \mathbb{R}-\{0\}$ such that $x_{1} \in B_{1}$ and $x_{1} \in B_{2}$. In a group $(\mathbb{R}-\{0\},\cdot)$, $x_{1}$ has an inverse $x_{1}^{-1}$. Moreover, $x_{1}^{-1}$ belongs to both $B_{1}^{-1}$ and $B_{2}^{-1}$. This shows that $B_{1}^{-1} \cap B_{2}^{-1} \neq \emptyset$. Thus, we observe that $B_{1}^{-1} \ \delta \ B_{2}^{-1}$. Consequently, $(\mathbb{R}-\{0\},\delta,\cdot)$ forms a proximal group.
\end{example}

\quad Note that $(\mathbb{R},\delta,+)$ is also a proximal group when we consider the usual addition $+$ on $\mathbb{R}$ as the group operation in Example \ref{exm1}, where $B_{1}+B_{2}$ is given by the set $\{b_{1}+b_{2} \ | \ b_{1} \in B_{1}, b_{2} \in B_{2}\}$.

\begin{example}
	Let $G_{2}$ be an abelian group, and define the proximity relation $\delta$ on $G_{2}$ as follows: For any sets $B_{1}$, $B_{2} \in G_{2}$, we say $B_{1} \ \delta \ B_{2}$ if and only if $B_{1}^{-1} \cdot B_{2}$ is of finite order in $G_{2}$. The map $\mu_{1} : G_{2} \times G_{2} \rightarrow G_{2}$, $\mu_{1}(g_{2},g_{2}^{'}) = g_{2}\cdot g_{2}^{'}$, is pcont: Let $B_{1} \times B_{2}$, $C_{1} \times C_{2} \in G_{2} \times G_{2}$ be any subsets satisfying that $B_{1} \times B_{2}$ is near $C_{1} \times C_{2}$. Then we have that $B_{1} \ \delta \ C_{1}$ and $B_{2} \ \delta \ C_{2}$, i.e., $B_{1}^{-1}C_{1}$ and $B_{2}^{-1}C_{2}$ are of finite order in $G_{2}$, respectively. Therefore, $(B_{2}^{-1}C_{2})\cdot(B_{1}^{-1}C_{1})$ is of finite order in $G_{2}$. Since $G_{2}$ is an abelian group, $(B_{1}B_{2})^{-1}C_{1}C_{2}$ is of finite order in $G_{2}$, which means that $B_{1}\cdot B_{2}$ is near $C_{1} \cdot C_{2}$ in $G_{2} \times G_{2}$. Moreover, the map $\mu_{2} : G_{2} \times G_{2} \rightarrow G_{2}$, $\mu_{2}(g_{2}) = g_{2}^{-1}$, is pcont: Let $B_{1}$, $B_{2} \in G$ with $B_{1} \ \delta \ B_{2}$. Then $B_{1}^{-1}B$ is of finite order $n$ in $G_{2}$, i.e., the $n-$times product $(B_{1}^{-1}B_{2})(B_{1}^{-1}B_{2})\cdots(B_{1}^{-1}B_{2})$ is the identity $e_{G_{2}}$ of $G_{2}$. Since $G_{2}$ is abelian, it follows that
	\begin{eqnarray*}
		(B_{1}^{-1}B_{2})(B_{1}^{-1}B_{2})\cdots(B_{1}^{-1}B_{2}) = e_{G_{2}} &\Rightarrow& (B_{2}B_{1}^{-1})(B_{2}B_{1}^{-1})\cdots(B_{2}B_{1}^{-1}) = e_{G_{2}} \\
		&\Rightarrow& B_{2}B_{1}^{-1}B_{2}B_{1}^{-1}\cdots B_{2}B_{1}^{-1} = e_{G_{2}}\\
		&\Rightarrow& e_{G_{2}} = B_{1}B_{2}^{-1}B_{1}B_{2}^{-1}\cdots B_{1}B_{2}^{-1}\\
		&\Rightarrow& (B_{1}B_{2}^{-1})(B_{1}B_{2}^{-1})\cdots(B_{1}B_{2}^{-1}) = e_{G_{2}}. 
	\end{eqnarray*}
	Hence, $B_{1}B_{2}^{-1}$ is of finite order $n$ in $G_{2}$, namely, $B_{1}^{-1} \ \delta \ B_{2}^{-1}$ in $G_{2}$.
	Finally, $(G_{2},\delta,\cdot)$ forms a proximal group.
\end{example}

\begin{theorem}\label{teo1}
	Let $(G_{1},\delta,\cdot)$ be a proximal group and $x_{1} \in G_{1}$. Then 
	\begin{eqnarray*}
		L_{x_{1}} : G_{1} \rightarrow G_{1},
	\end{eqnarray*} 
    defined by $L_{x_{1}}(g_{1}) = x_{1}g_{1}$, and
    \begin{eqnarray*}
    	R_{x_{1}} : G_{1} \rightarrow G_{1},
    \end{eqnarray*} 
    defined by $R_{x_{1}}(g_{1}) = g_{1}x_{1}$, are proximal isomorphisms.
\end{theorem}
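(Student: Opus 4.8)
The plan is to exhibit each translation as a composite of a ``slice insertion'' with the multiplication map $\mu_{1}$, and then invoke the fact, recalled in Section~\ref{sec:1}, that a composite of pcont maps is pcont. Fix $x_{1} \in G_{1}$ and consider $\iota^{L}_{x_{1}} : G_{1} \to G_{1} \times G_{1}$ given by $\iota^{L}_{x_{1}}(g_{1}) = (x_{1}, g_{1})$, together with $\iota^{R}_{x_{1}} : G_{1} \to G_{1} \times G_{1}$ given by $\iota^{R}_{x_{1}}(g_{1}) = (g_{1}, x_{1})$. For any subset $B \subseteq G_{1}$ one has $\iota^{L}_{x_{1}}(B) = \{x_{1}\} \times B$ and $\iota^{R}_{x_{1}}(B) = B \times \{x_{1}\}$, which are rectangular subsets of $G_{1} \times G_{1}$, so the cartesian product proximity relation applies to them directly. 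First I would check that $\iota^{L}_{x_{1}}$ is pcont: if $B_{1} \ \delta \ B_{2}$, then since $\{x_{1}\} \cap \{x_{1}\} \neq \emptyset$ we get $\{x_{1}\} \ \delta \ \{x_{1}\}$ by \textbf{L3}, and hence $(\{x_{1}\} \times B_{1}) \ \delta \ (\{x_{1}\} \times B_{2})$ by the definition of the cartesian product proximity; that is, $\iota^{L}_{x_{1}}(B_{1}) \ \delta \ \iota^{L}_{x_{1}}(B_{2})$. The same computation with the factors interchanged shows $\iota^{R}_{x_{1}}$ is pcont.

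Next, observe that $L_{x_{1}} = \mu_{1} \circ \iota^{L}_{x_{1}}$ and $R_{x_{1}} = \mu_{1} \circ \iota^{R}_{x_{1}}$, because $\mu_{1}(\{x_{1}\} \times B) = \{x_{1}\}\cdot B = x_{1}B$ and $\mu_{1}(B \times \{x_{1}\}) = B \cdot \{x_{1}\} = Bx_{1}$ for every $B \subseteq G_{1}$. Since $\mu_{1}$ is pcont by Definition~\ref{def1} and composites of pcont maps are pcont, both $L_{x_{1}}$ and $R_{x_{1}}$ are pcont. To finish, I would note that $L_{x_{1}}$ is a bijection of $G_{1}$ with two-sided inverse $L_{x_{1}^{-1}}$ (indeed $L_{x_{1}^{-1}} \circ L_{x_{1}} = \mathrm{id}_{G_{1}} = L_{x_{1}} \circ L_{x_{1}^{-1}}$ by associativity and the group axioms), and that $L_{x_{1}^{-1}}$ is itself a left translation, hence pcont by the argument just given applied to $x_{1}^{-1} \in G_{1}$. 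Therefore $L_{x_{1}}$ is a proximal isomorphism, and the identical reasoning with $R_{x_{1}^{-1}}$ in place of $R_{x_{1}}^{-1}$ shows $R_{x_{1}}$ is too.

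There is no serious obstacle here; the points that need care are the verification that the slice insertions are proximally continuous --- which hinges on the reflexivity instance $\{x_{1}\} \ \delta \ \{x_{1}\}$ furnished by \textbf{L3} and on the cartesian product proximity being stated precisely for rectangular subsets such as $\{x_{1}\} \times B$ --- together with the bookkeeping identities $L_{x_{1}}^{-1} = L_{x_{1}^{-1}}$ and $R_{x_{1}}^{-1} = R_{x_{1}^{-1}}$. It is worth remarking that only the proximal continuity of $\mu_{1}$ is used, the inversion map $\mu_{2}$ playing no role, so left and right translations come out as proximal isomorphisms for any set with a group structure whose multiplication alone is assumed pcont.
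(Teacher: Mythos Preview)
Your proof is correct and follows essentially the same route as the paper's: introduce the slice map $g_{1}\mapsto(x_{1},g_{1})$, check it is pcont, write $L_{x_{1}}$ as $\mu_{1}$ composed with this slice, and then use $L_{x_{1}}^{-1}=L_{x_{1}^{-1}}$ to get the inverse pcont. Your write-up is in fact a bit more explicit than the paper's (you justify $\{x_{1}\}\ \delta\ \{x_{1}\}$ via \textbf{L3} and note that only $\mu_{1}$, not $\mu_{2}$, is used), but the underlying argument is the same.
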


\begin{proof}
	First, we shall show that $L_{x_{1}}$ is a proximal isomorphism. Define a map \[\nu_{x_{1}} : G_{1} \rightarrow G_{1} \times G_{1}\] by $\nu_{x_{1}}(y_{1}) = (x_{1},y_{1})$. $B_{1} \ \delta \ B_{2}$ implies that $(\{x_{1}\},B_{1}) \ \delta^{'} \ (\{x_{1}\},B_{2})$ for any subsets $B_{1}$, $B_{2} \in G_{1}$, where $\delta^{'}$ is a proximity on $G_{1} \times G_{1}$. It follows that $\nu_{x_{1}}$ is pcont. Since $G_{1}$ is a proximal group, $\mu_{1}$ is pcont. Therefore, $L_{x_{1}}$ is pcont because $L_{x_{1}} = \mu_{1} \circ \nu_{x_{1}}$. With the same method, the proximal continuity of $L_{x_{1}}^{-1}$ can be easily shown by considering the fact \[L_{x_{1}}^{-1} = L_{x_{1}^{-1}}.\] Hence, $L_{x_{1}}$ is a proximal isomorphism. The argument is similar for $R_{x_{1}}$.
\end{proof}

\quad We say that a group $G_{1}$ has an invertible subset property with respect to a subset $B_{1} \subseteq G_{1}$ provided that $B_{1}\cdot B_{1}^{-1} = \{e_{G_{1}}\}$ and $B_{1}^{-1}\cdot B_{1} = \{e_{G_{1}}\}$. Note that a group always has an invertible subset property with respect to its one-point subsets.

\begin{lemma}\label{lem1}
	Let $\delta$ be a proximity and $\cdot$ a group operation on a set $G_{1}$, respectively. Assume that
	$\mu_{1} : G_{1} \times G_{1} \rightarrow G_{1}$, $\mu_{1}(g_{1}\cdot g_{1}^{'}) = g_{1}\cdot g_{1}^{'}$,
	is pcont and $G_{1}$ has the invertible subset property with respect to any subset of it. Then $(G_{1},\delta,\cdot)$ is a proximal group.
\end{lemma}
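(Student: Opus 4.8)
The plan is the following. By Definition \ref{def1}, to conclude that $(G_{1},\delta,\cdot)$ is a proximal group we must exhibit two pcont maps, $\mu_{1}$ and $\mu_{2}$. Since $\mu_{1}$ is pcont by hypothesis, the whole task reduces to showing that the inversion map $\mu_{2}:G_{1}\rightarrow G_{1}$, $\mu_{2}(g_{1})=g_{1}^{-1}$, is pcont, that is, that $B_{1}\ \delta\ B_{2}$ forces $B_{1}^{-1}\ \delta\ B_{2}^{-1}$ for all $B_{1},B_{2}\subseteq G_{1}$. The idea is to derive this implication solely from the already-available proximal continuity of $\mu_{1}$, the proximity axioms (chiefly L1, L2, L3), and the invertible subset property, applying $\mu_{1}$ twice and using the invertible subset property to collapse the unwanted complex products to $\{e_{G_{1}}\}$ — in the same spirit as the proof of Theorem \ref{teo1}, where $L_{x_{1}}$ was factored through $\mu_{1}$.

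Concretely, fix $B_{1},B_{2}\subseteq G_{1}$ with $B_{1}\ \delta\ B_{2}$, and write $\delta'$ for the cartesian product proximity on $G_{1}\times G_{1}$. First I would record the trivial preliminaries: by L2 the sets $B_{1},B_{2}$ are nonempty, hence $B_{1}^{-1},B_{2}^{-1}$ are nonempty, and therefore $B_{1}^{-1}\ \delta\ B_{1}^{-1}$ and $B_{2}^{-1}\ \delta\ B_{2}^{-1}$ by L3. Step one: from $B_{1}^{-1}\ \delta\ B_{1}^{-1}$ and $B_{1}\ \delta\ B_{2}$ we get $(B_{1}^{-1}\times B_{1})\ \delta'\ (B_{1}^{-1}\times B_{2})$; feeding this into the proximal continuity of $\mu_{1}$ gives $B_{1}^{-1}B_{1}\ \delta\ B_{1}^{-1}B_{2}$, and the invertible subset property of $G_{1}$ with respect to $B_{1}$ rewrites the left-hand side as $\{e_{G_{1}}\}$, so $\{e_{G_{1}}\}\ \delta\ B_{1}^{-1}B_{2}$. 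Step two: from $\{e_{G_{1}}\}\ \delta\ B_{1}^{-1}B_{2}$ and $B_{2}^{-1}\ \delta\ B_{2}^{-1}$ we get $(\{e_{G_{1}}\}\times B_{2}^{-1})\ \delta'\ ((B_{1}^{-1}B_{2})\times B_{2}^{-1})$; applying $\mu_{1}$ once more yields $\{e_{G_{1}}\}B_{2}^{-1}\ \delta\ (B_{1}^{-1}B_{2})B_{2}^{-1}$, which by associativity of the complex product and the invertible subset property with respect to $B_{2}$ (so $B_{2}B_{2}^{-1}=\{e_{G_{1}}\}$) simplifies to $B_{2}^{-1}\ \delta\ B_{1}^{-1}$. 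Finally L1 gives $B_{1}^{-1}\ \delta\ B_{2}^{-1}$, so $\mu_{2}$ is pcont and $(G_{1},\delta,\cdot)$ is a proximal group.

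The genuinely routine content is the subset-arithmetic $\{e_{G_{1}}\}B_{2}^{-1}=B_{2}^{-1}$ and $(B_{1}^{-1}B_{2})B_{2}^{-1}=B_{1}^{-1}$, which is just associativity and the identity law for complex products. The one point that needs care — and the reason the hypothesis quantifies over \emph{every} subset of $G_{1}$ rather than, say, singletons — is that the invertible subset property must be invoked once for $B_{1}$ and once for $B_{2}$, both arbitrary; a weaker form would not close the argument. I do not expect a real obstacle beyond this bookkeeping: the product proximity $\delta'$ is used only on rectangular subsets, which is exactly the form in which the proximal continuity of $\mu_{1}$ is stated and used (as in Definition \ref{def1} and the verification in Example \ref{exm1}).
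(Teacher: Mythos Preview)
Your argument is correct and follows essentially the same route as the paper: reduce to showing $\mu_{2}$ is pcont, left-multiply by $B_{1}^{-1}$ via the proximal continuity of $\mu_{1}$ to obtain $\{e_{G_{1}}\}\ \delta\ B_{1}^{-1}B_{2}$, then right-multiply by $B_{2}^{-1}$ to obtain $B_{2}^{-1}\ \delta\ B_{1}^{-1}$, invoking the invertible subset property for $B_{1}$ and $B_{2}$ at the two collapse points. The paper's version is terser (it suppresses the explicit passage through the product proximity $\delta'$ and the L1--L3 bookkeeping you spell out), but the logical skeleton is identical.
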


\begin{proof}
	We shall show that $\mu_{2} : G_{1} \rightarrow G_{1}$, $\mu_{2}(x_{1}) = x_{1}^{-1}$ is a pcont map. Let $B_{1}$, $B_{2} \subseteq G_{1}$ with $B_{1} \ \delta \ B_{2}$. Then we have $(B_{1}^{-1}\cdot B_{1}) \ \delta \ (B_{1}^{-1}\cdot B_{2})$. Since $B_{1}$ is invertible, $\{e_{G_{1}}\} \ \delta \ (B_{1}^{-1}\cdot B_{2})$. It follows that $B_{2}^{-1} \ \delta \ (B_{1}^{-1}\cdot(B_{2}\cdot B_{2}^{-1}))$. Therefore, we get $B_{2}^{-1} \ \delta \ B_{1}^{-1}$ because $B_{2}$ is invertible. This proves that $\mu_{2}$ is pcont. 
\end{proof}

\begin{theorem}\label{teo2}
	Let $\delta$ be a proximity and $\cdot$ a group operation on a set $G_{1}$, respectively. Assume that $L_{x_{1}}$ and $R_{x_{1}}$ in Theorem \ref{teo1} are pcont, and $G_{1}$ has the invertible subset property with respect to any subset of it. If $\delta$ admits the transitivity property, i.e., $B_{1} \ \delta \ B_{2}$ and $B_{2} \ \delta \ B_{3}$ imply that $B_{1} \ \delta \ B_{3}$ for any subsets $B_{1}$, $B_{2}$, $B_{3} \subseteq G_{1}$, then $(G_{1},\delta,\cdot)$ is a proximal group.
\end{theorem}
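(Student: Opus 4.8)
The plan is to reduce everything to Lemma \ref{lem1}. Since $G_{1}$ is assumed to have the invertible subset property with respect to every subset, Lemma \ref{lem1} already guarantees that $(G_{1},\delta,\cdot)$ is a proximal group the moment the multiplication $\mu_{1}:G_{1}\times G_{1}\to G_{1}$ is shown to be pcont; the inversion $\mu_{2}$ then comes for free. So the whole argument comes down to proving that $\mu_{1}$ is pcont, and the only ingredients available for that are: the translations $L_{x_{1}}$ and $R_{x_{1}}$ from Theorem \ref{teo1} are pcont, and $\delta$ is transitive.

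To do this I would take arbitrary boxes $B_{1}\times B_{2}$ and $C_{1}\times C_{2}$ in $G_{1}\times G_{1}$ with $(B_{1}\times B_{2})\ \delta^{'}\ (C_{1}\times C_{2})$ for the cartesian product proximity $\delta^{'}$; by definition this says $B_{1}\ \delta\ C_{1}$ and $B_{2}\ \delta\ C_{2}$, and by $L2$ all four sets are nonempty, so no degenerate case needs attention. The idea is to pass from $B_{1}\cdot B_{2}$ to $C_{1}\cdot C_{2}$ through the intermediate set $C_{1}\cdot B_{2}$. First, picking any point $b_{2}\in B_{2}$ and feeding $B_{1}\ \delta\ C_{1}$ into the pcont map $R_{b_{2}}$ gives $B_{1}b_{2}\ \delta\ C_{1}b_{2}$; since $B_{1}b_{2}\subseteq B_{1}B_{2}$ and $C_{1}b_{2}\subseteq C_{1}B_{2}$, monotonicity of $\delta$ (a consequence of $L1$ and $L4$) upgrades this to $B_{1}B_{2}\ \delta\ C_{1}B_{2}$. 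Symmetrically, picking any $c_{1}\in C_{1}$ and feeding $B_{2}\ \delta\ C_{2}$ into the pcont map $L_{c_{1}}$ gives $c_{1}B_{2}\ \delta\ c_{1}C_{2}$, hence $C_{1}B_{2}\ \delta\ C_{1}C_{2}$ again by monotonicity. Transitivity of $\delta$ applied to $B_{1}B_{2}\ \delta\ C_{1}B_{2}$ and $C_{1}B_{2}\ \delta\ C_{1}C_{2}$ then yields $B_{1}B_{2}\ \delta\ C_{1}C_{2}$, that is $\mu_{1}(B_{1}\times B_{2})\ \delta\ \mu_{1}(C_{1}\times C_{2})$, so $\mu_{1}$ is pcont. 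Invoking Lemma \ref{lem1} finishes the proof.

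The only place where anything delicate happens is the move from the one-point translations $L_{c_{1}},R_{b_{2}}$ — which is all the hypotheses provide — to the set translations $X\mapsto C_{1}X$ and $X\mapsto XB_{2}$ implicitly contained in $\mu_{1}$. I expect this to be the crux of the argument: one needs to see that a single point of each factor, combined with enlargement of both arguments, already delivers the nearness of the full products, and that the transitivity hypothesis is exactly what is needed to splice the two halves $B_{1}B_{2}\ \delta\ C_{1}B_{2}$ and $C_{1}B_{2}\ \delta\ C_{1}C_{2}$ together. It is worth noting that neither $L5$ nor $EF$ is used anywhere, so the argument goes through for any proximity $\delta$ satisfying $L1$–$L4$ that happens to be transitive.
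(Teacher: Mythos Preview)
Your proof is correct and follows essentially the same route as the paper: reduce to Lemma~\ref{lem1}, then pass from $B_{1}B_{2}$ to $C_{1}C_{2}$ via the intermediate $C_{1}B_{2}$ using right and left translations together with transitivity. In fact you are more careful than the paper at the one delicate point you flagged: the paper simply asserts that proximal continuity of $R_{x_{1}}$ gives $B_{1}B_{2}\ \delta\ C_{1}B_{2}$ directly, whereas you correctly observe that $R_{b_{2}}$ only yields $B_{1}b_{2}\ \delta\ C_{1}b_{2}$ for a single $b_{2}\in B_{2}$ and then invoke monotonicity (from $L1$ and $L4$) to enlarge to the full products.
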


\begin{proof}
	It is enough to show that $\mu_{1}$ in Definition \ref{def1} is pcont by Lemma \ref{lem1}. Let $B_{1} \times B_{2} \ \delta \ C_{1} \times C_{2}$ in $G_{1} \times G_{1}$. Then $B_{1} \ \delta \ C_{1}$ and $B_{2} \ \delta \ C_{2}$. $B_{1} \ \delta \ C_{1}$ and the proximal continuity of $R_{x_{1}}$ imply that $B_{1}\cdot B_{2}$ is near $C_{1}\cdot B_{2}$ in $G_{1} \times G_{1}$. Similarly, $B_{2} \ \delta \ C_{2}$ and the proximal continuity of $L_{x_{1}}$ imply that $C_{1} \cdot B_{2}$ is near $C_{1} \cdot C_{2}$ in $G_{1} \times G_{1}$. The transitivity property of $\delta$ says that $B_{1}\cdot B_{2}$ is near $C_{1}\cdot C_{2}$ in $G_{1} \times G_{1}$. It follows that $\mu_{1}(B_{1} \times B_{2}) \ \delta \ \mu_{1}(C_{1} \times C_{2})$, which means that $\mu_{1}$ is pcont. 
\end{proof}

\quad In Theorem \ref{teo2}, if we specifically choose the Lodato proximity $\delta^{'}$ on $G_{1}$, we need a slightly weaker condition instead of the transitivity property as follows:

\begin{corollary}
	Let $\delta^{'}$ be a Lodato proximity and $\cdot$ a group operation on a set $G_{1}$, respectively. Assume that $L_{x_{1}}$ and $R_{x_{1}}$ in Theorem \ref{teo1} are pcont, and $G_{1}$ has the invertible subset property with respect to any subset of it. If $\delta^{'}$ admits that $B_{1} \ \delta \ B_{2}$ imply $\{x_{1}\} \ \delta \ B_{2}$ for all $x_{1} \in B_{1}$, then $(G_{1},\delta^{'},\cdot)$ is a proximal group.
\end{corollary}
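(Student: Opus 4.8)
The plan is to adapt the proof of Theorem \ref{teo2} almost verbatim, the only difference being that we must replace the appeal to the transitivity property of $\delta$ by an appeal to the hypothesis on $\delta^{'}$, namely that $B_{1} \ \delta^{'} \ B_{2}$ forces $\{x_{1}\} \ \delta^{'} \ B_{2}$ for every $x_{1} \in B_{1}$. By Lemma \ref{lem1}, since $G_{1}$ has the invertible subset property with respect to every subset, it suffices to show that $\mu_{1}$ from Definition \ref{def1} is pcont; the inversion map $\mu_{2}$ then comes for free.

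First I would take subsets $B_{1} \times B_{2}$ and $C_{1} \times C_{2}$ of $G_{1} \times G_{1}$ with $(B_{1} \times B_{2}) \ \delta^{'} \ (C_{1} \times C_{2})$, which by the definition of the cartesian product proximity gives $B_{1} \ \delta^{'} \ C_{1}$ and $B_{2} \ \delta^{'} \ C_{2}$. As in Theorem \ref{teo2}, proximal continuity of $R_{x_{1}}$ (applied suitably) yields that $B_{1} \cdot B_{2}$ is near $C_{1} \cdot B_{2}$, and proximal continuity of $L_{x_{1}}$ yields that $C_{1} \cdot B_{2}$ is near $C_{1} \cdot C_{2}$. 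At this point Theorem \ref{teo2} invoked transitivity to conclude $B_{1} \cdot B_{2} \ \delta \ C_{1} \cdot C_{2}$; here instead I would proceed through the Lodato axioms. Concretely, from $(B_{1} \cdot B_{2}) \ \delta^{'} \ (C_{1} \cdot B_{2})$ and the stated hypothesis, for every $z \in C_{1} \cdot B_{2}$ we get $\{z\} \ \delta^{'} \ (B_{1} \cdot B_{2})$ (using symmetry \textbf{L1} to orient the pair correctly), and then combining with $(C_{1} \cdot B_{2}) \ \delta^{'} \ (C_{1} \cdot C_{2})$ via axiom \textbf{L5} gives $(C_{1} \cdot C_{2}) \ \delta^{'} \ (B_{1} \cdot B_{2})$, hence $(B_{1} \cdot B_{2}) \ \delta^{'} \ (C_{1} \cdot C_{2})$ by \textbf{L1}. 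Since $\mu_{1}(B_{1} \times B_{2}) = B_{1} \cdot B_{2}$ and $\mu_{1}(C_{1} \times C_{2}) = C_{1} \cdot C_{2}$, this shows $\mu_{1}$ is pcont, and an application of Lemma \ref{lem1} finishes the argument.

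The only delicate point, and the step I expect to require the most care, is getting the orientation of the near-relations right so that axiom \textbf{L5} applies: \textbf{L5} has the asymmetric shape "$B_{1} \ \delta \ B_{2}$ and $\{b_{2}\} \ \delta \ B_{3}$ imply $B_{1} \ \delta \ B_{3}$", so one must check that the middle set $C_{1} \cdot B_{2}$ plays the role of $B_{2}$ and that the stated hypothesis of the corollary is exactly what lets us pass from the set-level nearness $(B_{1} \cdot B_{2}) \ \delta^{'} \ (C_{1} \cdot B_{2})$ to the point-level nearness $\{z\} \ \delta^{'} \ (B_{1} \cdot B_{2})$ needed to trigger \textbf{L5}. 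Everything else—the cartesian product proximity unravelling, the use of $L_{x_{1}}$ and $R_{x_{1}}$, and the final invocation of Lemma \ref{lem1}—is routine and identical to the reasoning already carried out in Theorem \ref{teo2}.
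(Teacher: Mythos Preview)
Your proposal is correct and follows essentially the same route as the paper: obtain $B_{1}\cdot B_{2}\ \delta^{'}\ C_{1}\cdot B_{2}$ and $C_{1}\cdot B_{2}\ \delta^{'}\ C_{1}\cdot C_{2}$ from the proximal continuity of $R_{x_{1}}$ and $L_{x_{1}}$, then chain them via the extra hypothesis together with axiom \textbf{L5}, and finish with Lemma \ref{lem1}. The only cosmetic difference is that the paper applies the hypothesis to the second relation (so \textbf{L5} gives $B_{1}\cdot B_{2}\ \delta^{'}\ C_{1}\cdot C_{2}$ directly), whereas you apply it to the first and then use symmetry \textbf{L1}; both orientations work.
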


\begin{proof}
	Assume that $B_{1}\cdot B_{2}$ is near $C_{1}\cdot B_{2}$ and $C_{1} \cdot B_{2}$ is near $C_{1}\cdot C_{2}$ for any $B_{1} \times B_{2}$ and $C_{1} \times C_{2}$ in $G_{1} \times G_{1}$. Since $C_{1} \cdot B_{2}$ is near $C_{1} \cdot C_{2}$, it follows that $\{c_{1}b_{2}\}$ is near $C_{1} \cdot C_{2}$ for all $c_{1}b_{2} \in C_{1} \cdot B_{2}$. Therefore, we get $B_{1} \cdot B_{2}$ is near $C_{1} \cdot C_{2}$, which proves that $\mu_{1}$ is pcont. 
\end{proof}

\begin{proposition}\label{prop1}
	Let $(G_{1},\delta,\cdot)$ be a proximal group and $H_{1}$ a subgroup of $G_{1}$. Then $(H_{1},\delta_{H_{1}},\cdot)$ is a proximal group.
\end{proposition}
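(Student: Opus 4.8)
The plan is to verify directly that the two structure maps defining a proximal group, namely the multiplication $\mu_1^{H_1}: H_1 \times H_1 \to H_1$ and the inversion $\mu_2^{H_1}: H_1 \to H_1$, are proximally continuous with respect to the induced (subspace) proximity $\delta_{H_1}$. The key observation is that $H_1$ being a subgroup means the restricted operations are simply the restrictions of $\mu_1$ and $\mu_2$ of $G_1$; concretely, $\mu_1^{H_1} = \mu_1|_{H_1 \times H_1}$ and $\mu_2^{H_1} = \mu_2|_{H_1}$, and moreover $H_1$ is closed under these operations so the images land back in $H_1$. So this reduces to a statement about restrictions of pcont maps to subspaces.

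First I would recall the definitions: by the subspace proximity, for $B_1, B_2 \in 2^{H_1}$ we have $B_1 \ \delta_{H_1} \ B_2$ if and only if $B_1 \ \delta \ B_2$ (viewing $B_1, B_2$ as subsets of $G_1$). Likewise, the cartesian product proximity on $H_1 \times H_1$ coming from $\delta_{H_1}$ on each factor agrees with the restriction to $H_1 \times H_1$ of the product proximity on $G_1 \times G_1$: for $B_1 \times B_2, C_1 \times C_2 \in 2^{H_1 \times H_1}$, one has $(B_1 \times B_2) \ \delta_{H_1} \ (C_1 \times C_2)$ iff $B_1 \ \delta \ C_1$ and $B_2 \ \delta \ C_2$. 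I would state this compatibility explicitly as it is the conceptual crux.

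Then the verification is routine. For inversion: if $B_1 \ \delta_{H_1} \ B_2$ with $B_1, B_2 \subseteq H_1$, then $B_1 \ \delta \ B_2$ in $G_1$, so by proximal continuity of $\mu_2$ in $G_1$ we get $B_1^{-1} \ \delta \ B_2^{-1}$; but $B_1^{-1}, B_2^{-1} \subseteq H_1$ since $H_1$ is a subgroup, hence $B_1^{-1} \ \delta_{H_1} \ B_2^{-1}$, which is exactly proximal continuity of $\mu_2^{H_1}$. For multiplication: if $(B_1 \times B_2) \ \delta_{H_1} \ (C_1 \times C_2)$ in $H_1 \times H_1$, then $B_1 \ \delta \ C_1$ and $B_2 \ \delta \ C_2$ in $G_1$, so $(B_1 \times B_2) \ \delta \ (C_1 \times C_2)$ in $G_1 \times G_1$, and proximal continuity of $\mu_1$ gives $(B_1 \cdot B_2) \ \delta \ (C_1 \cdot C_2)$; since $B_1 \cdot B_2, C_1 \cdot C_2 \subseteq H_1$ (closure of $H_1$ under multiplication), we conclude $(B_1 \cdot B_2) \ \delta_{H_1} \ (C_1 \cdot C_2)$, as required.

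There is essentially no serious obstacle here; the only point needing care is the compatibility of the product of subspace proximities with the subspace of the product proximity, and the fact that $H_1$ being a subgroup guarantees the images of the restricted maps stay inside $H_1$ so that ``$\delta$ in $G_1$'' can be converted back to ``$\delta_{H_1}$ in $H_1$''. Once those are noted, the argument is a direct two-line unwinding of definitions for each of $\mu_1^{H_1}$ and $\mu_2^{H_1}$.
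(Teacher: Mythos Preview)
Your proposal is correct and follows essentially the same approach as the paper's proof: both observe that the structure maps on $H_1$ are the restrictions $\mu_1|_{H_1\times H_1}$ and $\mu_2|_{H_1}$ of the pcont maps on $G_1$, and hence are themselves pcont with respect to the subspace proximity. Your write-up is more explicit than the paper's (which simply asserts that the restrictions are pcont), in that you spell out the compatibility of the product-of-subspace proximity with the subspace-of-product proximity and the role of $H_1$ being closed under the operations, but the underlying idea is identical.
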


\begin{proof}
	Since $(G_{1},\delta,\cdot)$ is a proximal group, 
	\begin{eqnarray*}
		\mu_{1} : G_{1} \times G_{1} \rightarrow G_{1}, \ \ \ \mu_{1}(g_{1},g_{1}^{'}) = g_{1}\cdot g_{1}^{'}
	\end{eqnarray*}
    and
    \begin{eqnarray*}
    	\mu_{2} : G_{1} \rightarrow G_{1}, \ \ \ \mu_{2}(g_{1}) = g_{1}^{-1}
    \end{eqnarray*}
    are pcont. Then the restrictions \[\mu_{1}|_{H_{1} \times H_{1}} : H_{1} \times H_{1} \rightarrow H_{1}\] defined by $\mu_{1}|_{H_{1} \times H_{1}}(h_{1},h_{1}^{'}) = h_{1}\cdot h_{1}^{'}$ and \[\mu_{2}|_{H_{1}} : H_{1} \rightarrow H_{1}\] defined by $\mu_{2}|_{H_{1}}(h_{1}) = h_{1}^{-1}$ are pcont, respectively. This shows that $(H_{1},\delta_{H_{1}},\cdot)$ is a proximal group.
\end{proof}

\quad Note that, in Proposition \ref{prop1}, $(H_{1},\delta_{H_{1}},\cdot)$ is said to be a proximal subgroup of $(G_{1},\delta,\cdot)$. As an example, $\mathbb{R}^{+}$ is a proximal subgroup of $\mathbb{R}-\{0\}$ in Example \ref{exm1}.

\begin{proposition}\label{prop2}
	Given any proximal groups $(G_{1},\delta_{1},\cdot_{1})$ and $(G_{2},\delta_{2},\cdot_{2})$, their cartesian product $G_{1} \times G_{2}$ is also a proximal group.
\end{proposition}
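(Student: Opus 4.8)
The plan is to equip $G_{1} \times G_{2}$ with the componentwise group operation $(g_{1},g_{2})\cdot(g_{1}^{'},g_{2}^{'}) = (g_{1}\cdot_{1} g_{1}^{'},\, g_{2}\cdot_{2} g_{2}^{'})$, whose identity is $(e_{G_{1}},e_{G_{2}})$ and whose inversion sends $(g_{1},g_{2})$ to $(g_{1}^{-1},g_{2}^{-1})$, and with the cartesian product proximity relation $\delta$ built from $\delta_{1}$ and $\delta_{2}$. Writing $\mu_{1}^{G_{i}}$ and $\mu_{2}^{G_{i}}$ for the multiplication and inversion maps of $G_{i}$, I will show that the multiplication map $\mu_{1}:(G_{1}\times G_{2})\times(G_{1}\times G_{2})\to G_{1}\times G_{2}$ and the inversion map $\mu_{2}:G_{1}\times G_{2}\to G_{1}\times G_{2}$ of $G_{1}\times G_{2}$ are pcont, which is exactly what Definition \ref{def1} requires.

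First I would record two elementary facts about cartesian product proximities. (a) If $k:(Y_{1},\delta_{1})\to(Y_{1}^{'},\delta_{1}^{'})$ and $\ell:(Y_{2},\delta_{2})\to(Y_{2}^{'},\delta_{2}^{'})$ are pcont, then $k\times\ell$ is pcont for the corresponding product proximities: if $(B_{1}\times B_{2})\ \delta\ (C_{1}\times C_{2})$ then $B_{1}\ \delta_{1}\ C_{1}$ and $B_{2}\ \delta_{2}\ C_{2}$, so $k(B_{1})\ \delta_{1}^{'}\ k(C_{1})$ and $\ell(B_{2})\ \delta_{2}^{'}\ \ell(C_{2})$, whence $(k\times\ell)(B_{1}\times B_{2})\ \delta^{'}\ (k\times\ell)(C_{1}\times C_{2})$. (b) The coordinate-shuffle map
\[
\sigma:(G_{1}\times G_{2})\times(G_{1}\times G_{2})\longrightarrow (G_{1}\times G_{1})\times(G_{2}\times G_{2}),\qquad \sigma\big((a,b),(c,d)\big)=\big((a,c),(b,d)\big),
\]
is a proximal isomorphism, since on rectangular subsets unwinding the product proximity on either side turns ``$\sigma$-image near $\sigma$-image'' into the same four nearness assertions $B_{1}\ \delta_{1}\ C_{1}$, $B_{1}^{'}\ \delta_{1}\ C_{1}^{'}$, $B_{2}\ \delta_{2}\ C_{2}$, $B_{2}^{'}\ \delta_{2}\ C_{2}^{'}$ as the original one, and likewise for $\sigma^{-1}$ (which has the same form).

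With (a) and (b) in hand the proof is short. For inversion, $\mu_{2}=\mu_{2}^{G_{1}}\times\mu_{2}^{G_{2}}$ holds literally, so $\mu_{2}$ is pcont by (a). For multiplication, a direct check shows $\mu_{1}=(\mu_{1}^{G_{1}}\times\mu_{1}^{G_{2}})\circ\sigma$: on a rectangular set $(B_{1}\times B_{2})\times(B_{1}^{'}\times B_{2}^{'})$ both sides return $(B_{1}B_{1}^{'})\times(B_{2}B_{2}^{'})$. Since $\mu_{1}^{G_{1}}$ and $\mu_{1}^{G_{2}}$ are pcont because $G_{1}$ and $G_{2}$ are proximal groups, $\mu_{1}^{G_{1}}\times\mu_{1}^{G_{2}}$ is pcont by (a), $\sigma$ is pcont by (b), and a composition of pcont maps is pcont; hence $\mu_{1}$ is pcont. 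Therefore $(G_{1}\times G_{2},\delta,\cdot)$ is a proximal group.

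I do not expect a genuine obstacle here: the only point requiring care is that the cartesian product proximity is phrased only for ``rectangular'' subsets $B_{1}\times B_{2}$, so every nearness verification and every reindexing through $\sigma$ must be performed on such subsets; this is bookkeeping rather than a real difficulty. If one prefers, $\sigma$ can be bypassed altogether and $\mu_{1}$ shown pcont by a one-line computation: $(B_{1}\times B_{2})\times(B_{1}^{'}\times B_{2}^{'})$ near $(C_{1}\times C_{2})\times(C_{1}^{'}\times C_{2}^{'})$ yields $B_{1}\ \delta_{1}\ C_{1}$, $B_{1}^{'}\ \delta_{1}\ C_{1}^{'}$, $B_{2}\ \delta_{2}\ C_{2}$, $B_{2}^{'}\ \delta_{2}\ C_{2}^{'}$, hence by pcont of the factor multiplications $(B_{1}B_{1}^{'})\ \delta_{1}\ (C_{1}C_{1}^{'})$ and $(B_{2}B_{2}^{'})\ \delta_{2}\ (C_{2}C_{2}^{'})$, i.e. $\mu_{1}$ of the first set is near $\mu_{1}$ of the second.
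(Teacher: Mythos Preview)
Your proposal is correct and follows essentially the same route as the paper: both verify that the multiplication and inversion on $G_{1}\times G_{2}$ are pcont by reducing componentwise via the cartesian product proximity. Your version is simply more explicit---you isolate the product-of-pcont lemma and the shuffle map $\sigma$, whereas the paper defines the two maps and asserts their proximal continuity ``from the definition of cartesian product proximity'' without further detail.
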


\begin{proof}
	Since $G_{1}$ is a proximal group with a proximity $\delta_{1}$ and a group operation $\cdot_{1}$, we have that
	\begin{eqnarray*}
		\mu_{1} : G_{1} \times G_{1} \rightarrow G_{1}, \ \mu_{1}(g_{1},g_{1}^{'}) = g_{1}\cdot_{1} g_{1}^{'} \hspace*{0.5cm} \text{and} \hspace*{0.5cm} \mu_{2} : G_{1} \rightarrow G_{1}, \ \mu_{2}(g_{1}) = g_{1}^{-1}
	\end{eqnarray*}
    are pcont. Similarly, from the proximal group construction of $G_{2}$, we have that
    \begin{eqnarray*}
    	\mu_{1}^{'} : G_{2} \times G_{2} \rightarrow G_{2}, \ \mu_{1}^{'}(g_{2},g_{2}^{'}) = g_{2}\cdot_{2} g_{2}^{'} \hspace*{0.5cm} \text{and} \hspace*{0.5cm} \mu_{2}^{'} : G_{2} \rightarrow G_{2}, \ \mu_{2}^{'}(g_{2}) = g_{2}^{-1}
    \end{eqnarray*}
    are pcont. Define two maps
    \begin{eqnarray*}
    	\mu_{3} : (G_{1} \times G_{2})\times(G_{1} \times G_{2}) \rightarrow G_{1} \times G_{2}
    \end{eqnarray*}
    and
    \begin{eqnarray*}
    	\mu_{4} : G_{1} \times G_{2} \rightarrow G_{1} \times G_{2}
    \end{eqnarray*}
    by $\mu_{3}((g_{1},g_{2}),(g_{1}^{'},g_{2}^{'})) = (\mu_{1}(g_{1},g_{1}^{'}), \mu_{2}(g_{2},g_{2}^{'}))$ and $\mu_{4}(g_{1},g_{2}) = (\mu_{1}^{'}(g_{1}),\mu_{2}^{'}(g_{2}))$, respectively. Then $\mu_{3}$ and $\mu_{4}$ are pcont from the definition of cartesian product proximity. Thus, $G_{1} \times G_{2}$ is a proximal group having the product proximity $\delta_{1} \times \delta_{2}$ on itself.
\end{proof}

\begin{definition}
	Let $(G_{1},\delta_{1},\cdot_{1})$ and $(G_{2},\delta_{2},\cdot_{2})$ be any proximal groups. Then $\eta : G_{1} \rightarrow G_{2}$ is called a homomorphism of proximal groups provided that it is pcont group homomorphism. Furthermore, $\eta$ is called an isomorphism of proximal groups if it is a group isomorphism and also a proximal isomorphism. 
\end{definition}

\begin{example}
	Consider the antipodal map $\eta : (\mathbb{R},\delta,+) \rightarrow (\mathbb{R},\delta,+)$, $\eta(x) = -x$, where $\delta$ is given in Example \ref{exm1}, and $+$ is the usual additive group operation. For $B_{1}$, $B_{2} \in \mathbb{R}$, $B_{1} \ \delta \ B_{2}$ means that $B_{1} \cap B_{2} \neq \emptyset$. Then there exists $r \in \mathbb{R}$ such that $r$ belongs to both $B_{1}$ and $B_{2}$. Since $(\mathbb{R},+)$ is a group, $r$ has an inverse $-r$ in $\mathbb{R}$. It follows that $-r$ belongs to both $-B_{1}$ and $-B_{2}$, i.e., $(-B_{1}) \cap (-B_{2}) \neq \emptyset$. Therefore, \[\eta(B_{1}) = (-B_{1}) \ \delta \ (-B_{2}) = \eta(B_{2}).\] Thus, $\eta$ is pcont. Similarly, it can be easily shown that $\eta^{-1}$ is a pcont map. On the other hand, we observe
	\begin{eqnarray*}
		\eta(B_{1} + B_{2}) = -(B_{1} + B_{2}) = -B_{1} + (-B_{2}) = \eta(B_{1}) + \eta(B_{2}), 
	\end{eqnarray*} 
    which shows that $\eta$ is a group homomorphism. As a consequence, $\eta$ is a proximally group isomorphism.
\end{example}

\begin{theorem}
	Let $\eta : (G_{1},\delta_{1},\cdot_{1}) \rightarrow (G_{2},\delta_{2},\cdot_{2})$ be a group homomorphism between two proximal groups $G_{1}$ and $G_{2}$ such that they have the invertible subset property with respect to any subset of them. Then $\eta$ is a proximal homomorphism provided that $B_{1} \ \delta_{1} \ \{e_{G_{1}}\}$ implies that $\eta(B_{1}) \ \delta_{2} \ \{e_{G_{2}}\}$.
\end{theorem}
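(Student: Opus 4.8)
The plan is to check that $\eta$ is proximally continuous; since it is assumed to be a group homomorphism, it will then be a homomorphism of proximal groups by definition. So fix $B_{1}, B_{2} \subseteq G_{1}$ with $B_{1} \ \delta_{1} \ B_{2}$ (note both are nonempty by L2), and the goal is to produce $\eta(B_{1}) \ \delta_{2} \ \eta(B_{2})$. The idea is to first push the relation down to the identity of $G_{1}$, apply the extra hypothesis to land at the identity of $G_{2}$, and then reverse the first step inside $G_{2}$ — exactly the manoeuvre used in the proof of Lemma \ref{lem1}, carried out twice.

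For the first step I would observe that $B_{1}^{-1} \ \delta_{1} \ B_{1}^{-1}$ by L3 (as $B_{1}^{-1}$ is nonempty) and $B_{1} \ \delta_{1} \ B_{2}$ by hypothesis, so the cartesian product proximity on $G_{1} \times G_{1}$ gives $(B_{1}^{-1} \times B_{1}) \ \delta \ (B_{1}^{-1} \times B_{2})$. Applying proximal continuity of $\mu_{1} : G_{1} \times G_{1} \rightarrow G_{1}$ and using $\mu_{1}(B_{1}^{-1} \times B_{1}) = B_{1}^{-1} \cdot B_{1} = \{e_{G_{1}}\}$ (invertible subset property), I get $\{e_{G_{1}}\} \ \delta_{1} \ (B_{1}^{-1} \cdot B_{2})$, hence $(B_{1}^{-1} \cdot B_{2}) \ \delta_{1} \ \{e_{G_{1}}\}$ by L1. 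Feeding this into the extra hypothesis yields $\eta(B_{1}^{-1} \cdot B_{2}) \ \delta_{2} \ \{e_{G_{2}}\}$, and since $\eta$ is a group homomorphism, $\eta(B_{1}^{-1} \cdot B_{2}) = \eta(B_{1})^{-1} \cdot \eta(B_{2})$, so $\{e_{G_{2}}\} \ \delta_{2} \ \bigl(\eta(B_{1})^{-1} \cdot \eta(B_{2})\bigr)$.

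Finally I would run the first step in reverse inside $G_{2}$: from $\eta(B_{1}) \ \delta_{2} \ \eta(B_{1})$ (L3) and the relation just obtained, the cartesian product proximity on $G_{2} \times G_{2}$ gives $\bigl(\eta(B_{1}) \times \{e_{G_{2}}\}\bigr) \ \delta_{2} \ \bigl(\eta(B_{1}) \times (\eta(B_{1})^{-1} \cdot \eta(B_{2}))\bigr)$. Applying proximal continuity of $\mu_{1} : G_{2} \times G_{2} \rightarrow G_{2}$ and simplifying $\eta(B_{1}) \cdot \{e_{G_{2}}\} = \eta(B_{1})$ together with $\eta(B_{1}) \cdot (\eta(B_{1})^{-1} \cdot \eta(B_{2})) = (\eta(B_{1}) \cdot \eta(B_{1})^{-1}) \cdot \eta(B_{2}) = \{e_{G_{2}}\} \cdot \eta(B_{2}) = \eta(B_{2})$ (invertible subset property, now for $\eta(B_{1}) \subseteq G_{2}$) gives $\eta(B_{1}) \ \delta_{2} \ \eta(B_{2})$, as required.

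The genuinely routine parts are the set-level identities for complex products and the homomorphism identity $\eta(B^{-1} \cdot C) = \eta(B)^{-1} \cdot \eta(C)$. The step that needs care — and the main obstacle — is that "multiplying a proximity relation on the left by the set $\eta(B_{1})$" is not literally left translation by a group element ($L_{x_{1}}$ in Theorem \ref{teo1}), so it has to be executed through the cartesian product proximity and proximal continuity of $\mu_{1}$, precisely as in Lemma \ref{lem1}; and one must check that the invertible subset property is actually available for the two particular subsets $B_{1} \subseteq G_{1}$ and $\eta(B_{1}) \subseteq G_{2}$ that occur, which is why the hypothesis is demanded for all subsets of $G_{1}$ and $G_{2}$.
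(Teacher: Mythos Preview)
Your proof is correct and follows essentially the same approach as the paper: translate the relation $B_{1}\,\delta_{1}\,B_{2}$ to the identity using the invertible subset property and proximal continuity of $\mu_{1}$, apply the hypothesis, then translate back inside $G_{2}$. The only cosmetic difference is that the paper right-multiplies by $B_{2}^{-1}$ (obtaining $B_{1}B_{2}^{-1}\,\delta_{1}\,\{e_{G_{1}}\}$) whereas you left-multiply by $B_{1}^{-1}$; your write-up is also more explicit in justifying the set-multiplication step via the cartesian product proximity and $\mu_{1}$, which the paper simply attributes to ``$G_{i}$ is a proximal group''.
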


\begin{proof}
	Let $B_{1} \ \delta_{1} \ B_{2}$ for any $B_{1}$, $B_{2} \in G_{1}$. Then $(B_{1}B_{2}^{-1}) \ \delta_{1} \ (B_{2}B_{2}^{-1}) = \{e_{G_{1}}\}$ because $G_{1}$ is a proximal group. It follows that $\eta(B_{1}B_{2}^{-1}) \ \delta_{2} \ \eta(\{e_{G_{2}}\})$. Since $\eta$ is a group homomorphism, $\eta(B_{1}B_{2}^{-1}) = \eta(B_{1})\eta(B_{2})^{-1}$ and $\eta(\{e_{G_{1}}\}) = \{e_{G_{2}}\}$. Therefore, we get $(\eta(B_{1})\eta(B_{2})^{-1}) \ \delta_{2} \ \{e_{G_{2}}\}$. $G_{2}$ is a proximal group, so we find $\eta(B_{1}) \ \delta_{2} \ \eta(B_{2})$, which shows that $\eta$ is pcont.
\end{proof}

\begin{proposition}
	\textbf{i)} The First Isomorphism Theorem does not work for proximal groups.
	
	\textbf{ii)} The Second Isomorphism Theorem does not work for proximal groups.
	
	\textbf{iii)} The Third Isomorphism Theorem holds for proximal groups. 
\end{proposition}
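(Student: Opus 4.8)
The plan is to equip the quotient of a proximal group $(G_1,\delta,\cdot)$ by a normal subgroup $N_1$ with the quotient proximity: writing $\pi\colon G_1\to G_1/N_1$ for the canonical surjection, a subset $\mathcal A$ of $G_1/N_1$ is declared near a subset $\mathcal B$ exactly when $\pi^{-1}(\mathcal A)$ is near $\pi^{-1}(\mathcal B)$ in $G_1$; call this proximity $\overline\delta$. I would first record the routine facts that $\overline\delta$ is a proximity of the required kind, that $\pi$ is pcont (using only monotonicity of $\delta$, a consequence of $L3$ and $L4$), and that the induced multiplication and inversion on $G_1/N_1$ are pcont, their lifts along $\pi$ being $\mu_1$ and $\mu_2$; thus $(G_1/N_1,\overline\delta,\cdot)$ is again a proximal group and all three assertions are meaningful, the map to be tested in each being the canonical group isomorphism of Theorem~\ref{teo3}.

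For \textbf{iii)} I would argue purely from fibres. Let $N_1\subseteq K_1$ be normal subgroups of $G_1$, let $\pi_1\colon G_1\to G_1/N_1$ and $\pi_2\colon G_1/N_1\to(G_1/N_1)/(K_1/N_1)$ be the canonical surjections, set $\pi=\pi_2\circ\pi_1$, let $\rho\colon G_1\to G_1/K_1$ be the canonical surjection, and let $\psi$ be the Third-Isomorphism group isomorphism, so $\psi\circ\pi=\rho$. The decisive point is that $\pi$ and $\rho$ have the same fibres: $\pi(g)=\pi(g')$ iff $g(g')^{-1}N_1\in K_1/N_1$ iff $g(g')^{-1}\in K_1$ iff $\rho(g)=\rho(g')$. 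Because preimages compose, the quotient proximity on the two-step quotient $(G_1/N_1)/(K_1/N_1)$ is exactly the one coinduced directly by the single surjection $\pi$; combined with $\pi^{-1}\circ\psi^{-1}=\rho^{-1}$ this yields, for all subsets $\mathcal C,\mathcal D$ of $G_1/K_1$, the equivalence of ``$\mathcal C$ near $\mathcal D$ in $G_1/K_1$'', ``$\rho^{-1}(\mathcal C)$ near $\rho^{-1}(\mathcal D)$ in $G_1$'', ``$\pi^{-1}\psi^{-1}(\mathcal C)$ near $\pi^{-1}\psi^{-1}(\mathcal D)$ in $G_1$'', and ``$\psi^{-1}(\mathcal C)$ near $\psi^{-1}(\mathcal D)$ in the two-step quotient''. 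The two implications of this chain say precisely that $\psi$ and $\psi^{-1}$ are pcont, so $\psi$ is an isomorphism of proximal groups.

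For \textbf{i)} and \textbf{ii)} I would produce explicit counterexamples, the mechanism being that the quotient proximity on $G_1/\mathrm{Ker}(\beta)$ is built from $\delta_1$ while the subspace proximity on $\mathrm{Im}(\beta)$ is built from $\delta_2$, and these need not agree. For \textbf{i)}, take $G_1=(\mathbb R,\delta,+)$ with the intersection proximity of Example~\ref{exm1} and $H_1=(\mathbb R,\delta_m,+)$ with the metric proximity ($B_1$ near $B_2$ iff $d(B_1,B_2)=0$), both proximal groups, and let $\beta=\mathrm{id}_{\mathbb R}$, which is a pcont group homomorphism since $B_1\cap B_2\neq\emptyset$ forces $d(B_1,B_2)=0$. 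Then $\mathrm{Ker}(\beta)=\{0\}$, so $G_1/\mathrm{Ker}(\beta)=(\mathbb R,\delta)$ while $\mathrm{Im}(\beta)=(\mathbb R,\delta_m)$, and the canonical isomorphism is $\mathrm{id}_{\mathbb R}$, whose inverse fails to be pcont because $\{0\}$ is $\delta_m$-near $\{1/n:n\geq 1\}$ but $\delta$-far from it; in fact no additive isomorphism of $\mathbb R$ can serve, since any such map is $\mathbb Q$-linear and hence carries $1/n$ to a null sequence, so the First Isomorphism Theorem fails. For \textbf{ii)}, take $G_1=(\mathbb R,\delta_m,+)$, $N_1=\alpha\mathbb Z$ with $\alpha$ irrational, and $H_1=\mathbb Z$; then $H_1\cap N_1=\{0\}$, so $H_1/(H_1\cap N_1)$ is $\mathbb Z$ with the discrete proximity, whereas $H_1N_1=\mathbb Z+\alpha\mathbb Z$ is dense in $\mathbb R$ and in $H_1N_1/N_1\cong\mathbb Z$ the sets $2\mathbb Z$ and $2\mathbb Z+1$ are near, since their $\pi$-preimages $2\mathbb Z+\alpha\mathbb Z$ and $1+2\mathbb Z+\alpha\mathbb Z$ are both dense in $\mathbb R$, hence at distance $0$. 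So the canonical isomorphism $H_1/(H_1\cap N_1)\to H_1N_1/N_1$ is pcont but carries the far pair $(2\mathbb Z,2\mathbb Z+1)$ to a near pair, its inverse is not pcont, and the Second Isomorphism Theorem fails; here too no isomorphism of proximal groups exists, since $H_1/(H_1\cap N_1)$ is proximally discrete and $H_1N_1/N_1$ is not.

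The main obstacle I foresee is the bookkeeping around the quotient proximity --- confirming that $(G_1/N_1,\overline\delta,\cdot)$ is a proximal group of the admitted kind, and that iterating the quotient construction gives the same proximity as quotienting in one step. Once this is settled, \textbf{iii)} is just the slogan ``equal fibres give equal coinduced proximities'' applied to $\pi$ and $\rho$, and \textbf{i)}, \textbf{ii)} reduce to the small density and distance checks above.
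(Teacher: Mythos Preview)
Your proposal is correct and in spirit matches the paper, but with some genuine differences worth noting. For \textbf{i)} you give exactly the paper's counterexample (discrete versus metric proximity on $\mathbb R$, with $\beta=\mathrm{id}$), and for \textbf{iii)} the paper writes only ``the proof is similar to the case of topological groups'' while you spell out the fibre argument in full; your treatment is more informative but not a different idea. The real divergence is in \textbf{ii)}: the paper takes $G_1=(\mathbb R,\delta_m,+)$, $N_1=\mathbb Z$ and $H_1=\{2s:s\in\mathbb Q^{c}\}$, arguing that $H_1$ inherits a discrete proximity while $(H_1+N_1)/N_1$ is dense in $\mathbb R/\mathbb Z$; your choice $N_1=\alpha\mathbb Z$, $H_1=\mathbb Z$ runs the same density mechanism more cleanly and sidesteps the difficulty that the paper's $H_1$ is not closed under addition. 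Your extra observation that \emph{no} group isomorphism can be a proximal isomorphism in either counterexample (via $\mathbb Q$-linearity in \textbf{i)}, via the discrete/non-discrete dichotomy in \textbf{ii)}) is a strengthening the paper does not make. Finally, your explicit setup of the quotient proximity and the ``iterated quotient equals one-step quotient'' check are left entirely implicit in the paper, so the bookkeeping you flag as the main obstacle is something the paper simply does not address; your caution there is well placed rather than a gap.
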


\begin{proof}
	\textbf{i)} Let $(\mathbb{R},\delta_{1},+)$ and $(\mathbb{R},\delta_{2},+)$ be two proximal groups, where $\delta_{1}$ is the discrete proximity and $\delta_{2}$ is given by $B_{1} \ \delta_{2} \ B_{2} \Leftrightarrow D(B_{1},B_{2}) = 0$. The identity map
	\begin{eqnarray*}
		1 : (\mathbb{R},\delta_{1},+) \rightarrow (\mathbb{R},\delta_{2},+)
	\end{eqnarray*}
    is both a pcont map and a group homomorphism. Also, the identity map is surjective and $Ker(1)$ only consists of the identity element of $(\mathbb{R},+)$. However, $\mathbb{R} / Ker(1)$, that is proximally isomorphic to $\mathbb{R}$ as proximal groups, with the discrete proximity is not proximally isomorphic to $\mathbb{R}$ with the proximity $\delta_{2}$: Since $D(B_{1},B_{2}) = 0$ does not always imply $B_{1} \cap B_{2} \neq \emptyset$ for any $B_{1}$, $B_{2} \subseteq \mathbb{R}$, the inverse of the identity map $1$ is not pcont. 
    
    \textbf{ii)} Let $G_{1} = (\mathbb{R},\delta,+)$ be a proximal group with its subgroup $H_{1} = \{2s \ | \ s \in \mathbb{Q}^{c}\}$ and its normal subgroup $N_{1} = \mathbb{Z}$, where $\delta$ is the proximity $\delta_{2}$ given in i). Then the intersection of $H_{1}$ and $N_{1}$ is empty, which follows that, as proximal groups, $H_{1}$ is proximally isomorphic to $H_{1} / (H_{1} \cap N_{1})$ as proximal groups. Since $d(B_{1},B_{2}) > 0$ for all $B_{1}$, $B_{2} \subseteq H_{1}$, $H_{1}$ must have the discrete proximity. On the other hand, we have that $(H_{1}+N_{1})^{\delta} = \mathbb{R}$. Therefore, $[(H_{1}+N_{1})/N_{1}]^{\delta} = \mathbb{R}/\mathbb{Z}$, which means that $(H_{1}+N_{1})/N_{1}$ cannot have the discrete proximity. Consequently, the map \[(H_{1}+N_{1})/N_{1} \rightarrow H_{1} / (H_{1} \cap N_{1})\] is not a group isomorphism of proximal groups.
    
    \textbf{iii)} The proof is similar to the case of topological groups.  
\end{proof}

\quad Note that a continuous map need not be pcont. However, a pcont map is always continuous with respect to compatible topologies. Hence, given a proximal group $(Y,\delta,\cdot)$, we have that $(Y,\tau(\delta),\cdot)$ is a topological group since pcont maps $\mu_{1}$ and $\mu_{2}$ in Definition \ref{def1} are also continuous maps with respect to the compatible topologies. We say that $(Y,\tau(\delta),\cdot)$ is a topological group induced by $\delta$.

\begin{theorem}\label{teo4}
	Let $(G_{1},\delta,\cdot)$ be a proximal group. Then $G_{1}$ admits an Hausdorff topological group if and only if $\{e_{G_{1}}\} \ \delta \ B_{1}$ implies $B_{1} = \{e_{G_{1}}\}$ for $B_{1} \subseteq G_{1}$.
\end{theorem}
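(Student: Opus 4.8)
The plan is to interpret ``$G_1$ admits an Hausdorff topological group'' as the assertion that the induced topological group $(G_1,\tau(\delta),\cdot)$ of the remark preceding the theorem is Hausdorff, and then to reduce Hausdorffness to one closure computation inside the proximity. The engine is the classical fact (see \cite{Husain:1966}) that a topological group is Hausdorff precisely when its identity is a closed point, which I would supply in the present setting by the short argument: the map $q:G_1\times G_1\to G_1$, $q(x,y)=x\cdot y^{-1}$, is continuous, being $\mu_1$ composed with $\mathrm{id}\times\mu_2$ and $(G_1,\tau(\delta),\cdot)$ being a topological group; hence if $\{e_{G_1}\}$ is closed then the diagonal $\Delta=q^{-1}(\{e_{G_1}\})$ is closed in $G_1\times G_1$, so $G_1$ is Hausdorff, while conversely a Hausdorff space is $T_1$ and thus $\{e_{G_1}\}$ is closed. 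It is this converse implication that genuinely uses the group operations, which is why the hypothesis ``proximal group'' rather than merely ``proximity space'' is needed.

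Next I would express closedness of $\{e_{G_1}\}$ in $\tau(\delta)$ in terms of $\delta$. By the description of the closure operator recalled in Section~\ref{sec:1}, $\mathrm{cl}\{e_{G_1}\}=\{e_{G_1}\}^{\delta}=\{\,y\in G_1:\{y\}\ \delta\ \{e_{G_1}\}\,\}$, so $\{e_{G_1}\}$ is $\tau(\delta)$-closed exactly when $\{y\}\ \delta\ \{e_{G_1}\}$ forces $y=e_{G_1}$; by symmetry (axiom L1) this is the same as requiring $\{e_{G_1}\}\ \delta\ \{y\}\Rightarrow y=e_{G_1}$. Assembling the two steps gives
\[
(G_1,\tau(\delta),\cdot)\ \text{Hausdorff}\ \Longleftrightarrow\ \{e_{G_1}\}\ \tau(\delta)\text{-closed}\ \Longleftrightarrow\ \bigl(\{e_{G_1}\}\ \delta\ \{y\}\Rightarrow y=e_{G_1}\bigr),
\]
which is the condition of the theorem read with $B_1$ a singleton.

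The one place needing care — and the only real obstacle — is the passage between arbitrary subsets $B_1$ and singletons. For the ``if'' direction one simply specializes the hypothesis to $B_1=\{y\}$ and invokes the chain above. For the ``only if'' direction one must be mindful that the statement ``$\{e_{G_1}\}\ \delta\ B_1\Rightarrow B_1=\{e_{G_1}\}$'' is, for genuinely arbitrary $B_1$, stronger than closedness of $\{e_{G_1}\}$: axiom L3 forces $\{e_{G_1}\}\ \delta\ B_1$ whenever $e_{G_1}\in B_1$, and even for $B_1$ disjoint from $\{e_{G_1}\}$ a limit-type proximity may satisfy $\{e_{G_1}\}\ \delta\ B_1$ with $B_1$ non-singleton. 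So I would phrase the theorem (and its proof) with $B_1$ understood to range over one-point subsets — equivalently, over the closure points of $\{e_{G_1}\}$ — after which the two implications are immediate from (a) the topological-group fact of the first paragraph, (b) the closure formula of Section~\ref{sec:1}, and (c) axiom L1. Everything outside of (c)'s bookkeeping is routine.
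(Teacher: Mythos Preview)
Your approach is exactly the paper's: the proof there is a single sentence invoking the classical fact that a topological group is Hausdorff if and only if $\{e_{G_1}\}$ is closed. Your additional care about the quantification over $B_1$ is well placed---axiom L3 indeed makes the condition, read literally over all subsets, too strong for the ``only if'' direction---and the paper does not address this, so your restriction to singletons is the charitable reading.
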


\begin{proof}
	The assertion is clear from the fact that, for a topological group $G_{1}$, it is Hausdorff if and only if $\{e_{G_{1}}\}$ is closed.
\end{proof}

\quad In a topological group, the axioms $T_{0}$, $T_{1}$, and $T_{2}$ (Hausdorffness) coincide. This means that one can consider the topological group $(G_{1},\delta(\tau),\cdot)$ as $T_{0}$ or $T_{1}$ instead of $T_{2}$ in Theorem \ref{teo4}.

\section{Descriptive Proximal Groups}
\label{sec:3}

\begin{definition}
	For a descriptive proximity $\delta_{\Phi}$ and a group operation $\cdot$ on a set $G_{1}$,  $(G_{1},\delta_{\Phi},\cdot)$ is said to be a descriptive proximal group when
	\begin{eqnarray*}
		\mu_{1} : G_{1} \times G_{1} \rightarrow G_{1},
	\end{eqnarray*} 
	defined by $\mu_{1}(g_{1},g_{1}^{'}) = g_{1}\cdot g_{1}^{'}$ for any $g_{1}$, $g_{1}^{'} \in G$,
	and
	\begin{eqnarray*}
		\mu_{2} : G_{1} \rightarrow G_{1},
	\end{eqnarray*} 
	defined by $\mu_{2}(g_{1}) = g_{1}^{-1}$ for any $g_{1} \in G_{1}$, are dpcont maps.
\end{definition}

\begin{example}
	Let $X$ be a set shown in Figure \ref{fig:1}, which consists of three boxes $A$, $B$, and $C$. Consider $G$ as the set of all dpcont paths on $X$. For any dpcont paths $\gamma_{1}$, $\gamma_{2}$ in $G$ with $\gamma_{1}(1) = \gamma_{2}(0)$, a group operation $\ast$ on $G$ is defined by \[\gamma_{1} \ast \gamma_{2}(s) = \begin{cases}
		\gamma_{1}(2s), & 0 \leq s \leq 1/2 \\
		\gamma_{2}(2s-1), & 1/2 \leq s \leq 1.
	\end{cases}\] 
    Note that $\gamma_{i}(s) \in \{A,B,C\}$ for each $i = 1,2$. When one considers $\Phi$ as a probe function that determines any descriptive proximal path by the order of the box names of that path, $\delta_{\Phi}$ is a descriptive proximity on $G$. Consider the map \linebreak$\mu_{1} : G \times G \rightarrow G$ given by $\mu_{1}(\gamma_{1},\gamma_{2}) = \gamma_{1} \ast \gamma_{2}$. For any $(\gamma_{1},\gamma_{2})$, $(\gamma_{3},\gamma_{4}) \in G \times G$, the fact $(\gamma_{1},\gamma_{2})$ is descriptively near $(\gamma_{3},\gamma_{4})$ implies that $\gamma_{1} \ \delta_{\Phi} \ \gamma_{3}$ and $\gamma_{2} \ \delta_{\Phi} \ \gamma_{4}$. Then, for all $s_{1}$, $s_{2} \in [0,1]$, we have that $\gamma_{1}(s_{1}) = \gamma_{3}(s_{1})$ and $\gamma_{2}(s_{2}) = \gamma_{4}(s_{2})$. It follows that $\gamma_{1} \ast \gamma_{2} = \gamma_{3} \ast \gamma_{4}$, which means that $\gamma_{1} \ast \gamma_{2}$ is descriptively near $\gamma_{3} \ast \gamma_{4}$. Hence, $\mu_{1}$ is dpcont. Moreover, the map $\mu_{2} : G \rightarrow G$, defined by $\mu_{2}(\gamma) = \gamma^{-1}$, is dpcont. Indeed, for any $\gamma_{1}$, $\gamma_{2} \in G$, $\gamma_{1} \ \delta_{\Phi} \ \gamma_{2}$ implies that $\gamma_{1}(s) = \gamma_{2}(s)$ for all $s \in [0,1]$. Therefore, $\gamma_{1}^{-1}(s) = \gamma_{2}^{-1}(s)$ for all $s \in [0,1]$. This shows that $\gamma_{1}^{-1} \ \delta_{\Phi} \ \gamma_{2}^{-1}$, and finally, $(G,\delta_{\Phi},\circ)$ is a descriptive proximal group.
\end{example}

\begin{figure}[h]
	\centering
	\includegraphics[width=1.00\textwidth]{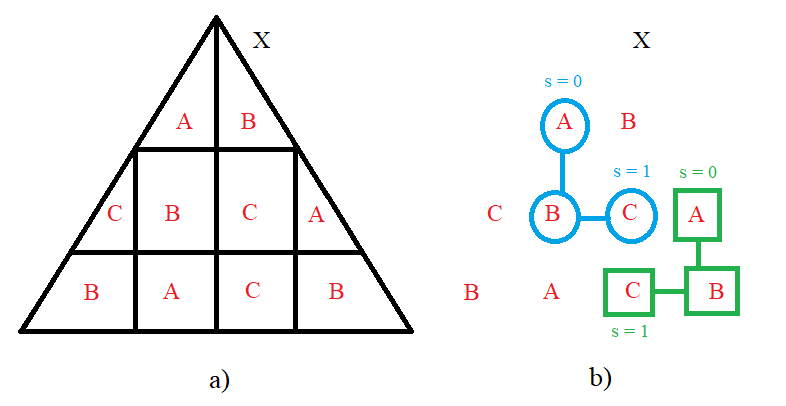}
	\caption{\textbf{a)} The picture $X$ consists of boxes $A$, $B$, and $C$. \\ \textbf{b)} The blue path ABC is descriptively near the green path. Orders of the paths are equal to each other.} 
	\label{fig:1}
\end{figure}
\newpage
\begin{theorem}
	Let $(G_{1},\delta_{\Phi},\cdot)$ be a descriptive proximal group and $x_{1} \in G_{1}$. Then 
	\begin{eqnarray*}
		L_{x_{1}} : G_{1} \rightarrow G_{1},
	\end{eqnarray*} 
	defined by $L_{x_{1}}(g_{1}) = x_{1}g_{1}$, and
	\begin{eqnarray*}
		R_{x_{1}} : G_{1} \rightarrow G_{1},
	\end{eqnarray*} 
	defined by $R_{x_{1}}(g_{1}) = g_{1}x_{1}$, are descriptive proximal isomorphisms.
\end{theorem}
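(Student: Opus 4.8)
The plan is to transcribe the proof of Theorem~\ref{teo1} into the descriptive setting: replace $\delta$ by $\delta_{\Phi}$, proximal continuity by descriptive proximal continuity, and the cartesian product proximity by the cartesian product descriptive proximity. Concretely, I would factor $L_{x_1}$ through the group multiplication $\mu_1$ via an insertion map, and then observe that the inverse of a left translation is again a left translation.

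First I would introduce $\nu_{x_1} : G_1 \to G_1 \times G_1$, $\nu_{x_1}(y_1) = (x_1, y_1)$, and check that it is dpcont. If $B_1 \ \delta_{\Phi} \ B_2$ in $G_1$, then $\nu_{x_1}(B_1) = \{x_1\} \times B_1$ and $\nu_{x_1}(B_2) = \{x_1\} \times B_2$; by the definition of the cartesian product descriptive proximity on $G_1 \times G_1$, $(\{x_1\} \times B_1) \ \delta_{\Phi} \ (\{x_1\} \times B_2)$ holds exactly when $\{x_1\} \ \delta_{\Phi} \ \{x_1\}$ and $B_1 \ \delta_{\Phi} \ B_2$. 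The second condition is the hypothesis, and the first holds since $\Phi(\{x_1\}) \cap \Phi(\{x_1\}) \neq \emptyset$ (equivalently, by axiom \textbf{DL3} applied to the descriptive self-intersection of $\{x_1\}$). Hence $\nu_{x_1}$ is dpcont, and since $\mu_1$ is dpcont because $G_1$ is a descriptive proximal group, $L_{x_1} = \mu_1 \circ \nu_{x_1}$ is dpcont as a composition of dpcont maps.

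Next I would use that $L_{x_1}$ is bijective with $L_{x_1}^{-1} = L_{x_1^{-1}}$; applying the previous paragraph to $x_1^{-1}$ in place of $x_1$ shows that $L_{x_1}^{-1}$ is dpcont, so $L_{x_1}$ is a descriptive proximal isomorphism. The treatment of $R_{x_1}$ is the mirror image: take $\nu'_{x_1} : G_1 \to G_1 \times G_1$, $\nu'_{x_1}(y_1) = (y_1, x_1)$, which is dpcont by the same computation, write $R_{x_1} = \mu_1 \circ \nu'_{x_1}$, and invoke $R_{x_1}^{-1} = R_{x_1^{-1}}$.

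I do not expect a genuine obstacle here. The single point deserving care is the reflexivity fact $\{x_1\} \ \delta_{\Phi} \ \{x_1\}$ underlying the dpcont-ness of the insertion maps; this is immediate from the definition of $\delta_{\Phi}$ (or from \textbf{DL3}), after which the argument is a routine copy of Theorem~\ref{teo1}.
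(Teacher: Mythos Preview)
Your proposal is correct and is precisely the approach the paper intends: the paper's own proof consists of the single remark that the argument is parallel to Theorem~\ref{teo1} because compositions of dpcont maps are dpcont, and you have simply written out that parallel explicitly. Your extra observation that $\{x_1\}\ \delta_{\Phi}\ \{x_1\}$ follows from \textbf{DL3} (or directly from $\Phi(\{x_1\})\cap\Phi(\{x_1\})\neq\emptyset$) fills in the only detail not already covered verbatim by Theorem~\ref{teo1}.
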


\begin{proof}
	The proof is parallel with Theorem \ref{teo1} since the composition of dpcont maps is again dpcont.
\end{proof}

\begin{remark}
	For a descriptive proximal group $(G_{1},\delta_{\Phi},\cdot)$ and a subgroup $H_{1}$ of $G_{1}$, we say that $(H_{1},\delta_{\Phi_{H_{1}}},\cdot)$ is a descriptive proximal subgroup of $(G_{1},\delta_{\Phi},\cdot)$.
\end{remark} 

\begin{example}\label{exm2}
	Consider the additive group $(\mathbb{R},+)$. Let $\Phi = \{\phi_{1},\phi_{2}\}$ be the probe function such that $\phi_{1} : \mathbb{R} \rightarrow \mathbb{R}$ and $\phi_{2} : \mathbb{R} \rightarrow \mathbb{R}$ are defined by $\phi_{1}(y) = trunc(y)$ and $\phi_{2}(y) = \begin{cases}
		y, & y \in \mathbb{Z}^{c} \\
		y+0.3, & y \in \mathbb{Z}
	\end{cases}$ for any $y \in \mathbb{R}$, respectively. Here the function $trunc(y)$ truncates $y \in \mathbb{R}$ to an integer by removing the fractional part of the number. For example, $trunc(-3.4) = -3$ and $trunc(5.6) = 5$. Note that $\phi_{1}(y) \in \mathbb{Z}$ and $\phi_{2}(y) \in \mathbb{Z}^{c}$. To show that \[\mu_{1} : \mathbb{R} \times \mathbb{R} \rightarrow \mathbb{R}, \ \mu_{1}(y_{1},y_{2}) = y_{1}+y_{2},\] is dpcont, we shall show that $B_{1} \times B_{2}$ is descriptively near $C_{1} \times C_{2}$ implies that $(B_{1}+B_{2}) \ \delta_{\Phi} \ (C_{1}+C_{2})$ for any $B_{1} \times B_{2}$, $C_{1} \times C_{2} \in \mathbb{R} \times \mathbb{R}$. Since $B_{1} \times B_{2}$ is descriptively near $C_{1} \times C_{2}$, we have that $B_{1}$ is descriptively near $C_{1}$ and $B_{2}$ is descriptively near $C_{2}$. When we consider $B_{1} \ \delta_{\Phi} \ C_{1}$ (i.e., $\mathcal{Q}(B_{1}) \cap \mathcal{Q}(C_{1}) \neq \emptyset$), there are some cases as follows:
\begin{itemize}
	\item There exists $b_{1} \in \mathbb{R}$ such that $b_{1} \in B_{1} \cap C_{1}$.
	\item There exist $b_{1} \in \mathbb{Z}$ and $c_{1} \in \mathbb{Z}^{c}$ such that $b_{1} \in B_{1}$ and $c_{1} \in C_{1}$ with $trunc(c_{1}) = b_{1}$.
	\item There exist $b_{1} \in \mathbb{Z}$ and $c_{1} \in \mathbb{Z}^{c}$ such that $b_{1} \in B_{1}$ and $c_{1} \in C_{1}$ with $c_{1} = b_{1} + 0.3$.
\end{itemize}
    The cases are hold when we focus on $B_{2} \ \delta_{\Phi} \ D_{2}$ (i.e., $\mathcal{Q}(B_{2}) \cap \mathcal{Q}(D_{2}) \neq \emptyset$):
 \begin{itemize}
 	\item There exists $b_{2} \in \mathbb{R}$ such that $b_{2} \in B_{2} \cap C_{2}$.
 	\item There exist $b_{2} \in \mathbb{Z}$ and $c_{2} \in \mathbb{Z}^{c}$ such that $b_{2} \in B_{2}$ and $c_{2} \in C_{2}$ with $trunc(c_{2}) = b_{2}$.
 	\item There exist $b_{2} \in \mathbb{Z}$ and $c_{2} \in \mathbb{Z}^{c}$ such that $b_{2} \in B_{2}$ and $c_{2} \in C_{2}$ with $c_{2} = b_{2} + 0.3$.
 \end{itemize} 
    For all cases, we have that $\mathcal{Q}(B_{1}+B_{2}) \cap \mathcal{Q}(C_{1}+C_{2}) \neq \emptyset$. This means that $B_{1}+B_{2}$ is descriptively near $C_{1}+C_{2}$. Now, define 
    \begin{eqnarray*}
    	\mu_{2} : \mathbb{R} \rightarrow \mathbb{R}, \ \mu_{2}(y) = -y.
    \end{eqnarray*}
    Let $B_{1}$, $B_{2} \in 2^{\mathbb{R}}$ with $B_{1} \ \delta_{\Phi} \ B_{2}$. Then there are three cases again:
    \begin{itemize}
    	\item There exists $b_{1} \in \mathbb{R}$ such that $b_{1} \in B_{1} \cap B_{2}$.
    	\item There exist $b_{1} \in \mathbb{Z}$ and $b_{2} \in \mathbb{Z}^{c}$ such that $b_{1} \in B_{1}$ and $b_{2} \in B_{2}$ with $trunc(b_{2}) = b_{1}$.
    	\item There exist $b_{1} \in \mathbb{Z}$ and $b_{2} \in \mathbb{Z}^{c}$ such that $b_{1} \in B_{1}$ and $b_{2} \in B_{2}$ with $b_{2} = b_{1} + 0.3$.
    \end{itemize}
    In each case, we find that $-B_{1}$ is descriptively near $-B_{2}$:
    \begin{itemize}
    	\item If there is a real number $b_{1} \in B_{1} \cap B_{2}$, then the real number $-b_{1}$ belongs to both $-B_{1}$ and $-B_{2}$.
    	\item If $trunc(b_{2}) = b_{1}$, then $trunc(-b_{2}) = -b_{1}$. 
    	\item If $b_{2} = b_{1} + 0.3$, then $-b_{1} = -b_{2} + 0.3$.
    \end{itemize}
    Therefore, we observe that $-b_{1} \in \mathcal{Q}(-B_{1}) \cap \mathcal{Q}(-B_{2})$ for all cases, namely that, $(-B_{1}) \ \delta_{\Phi} \ (-B_{2})$. This shows that $\mu_{2}$ is dpcont. Hence, $(\mathbb{R},\delta_{\Phi},+)$ is a descriptive proximal group. Moreover, the fact $(\mathbb{Q},+)$ is a subgroup of $(\mathbb{R},+)$ shows that $(\mathbb{Q},\delta_{\Phi_{\mathbb{Q}}},+)$ is a descriptive proximal subgroup.
\end{example}

\quad Similar to Proposition \ref{prop2}, the cartesian product of two descriptive proximal groups is a descriptive proximal group. Consider the descriptive proximal group $(\mathbb{R},\delta_{\Phi},+)$ given in Example \ref{exm2}. Then we have that $(\mathbb{R}^{2},\delta_{\Phi^{'}},+)$ is also a descriptive proximal group, where $\delta_{\Phi^{'}}$ is the cartesian product descriptive proximity $\delta_{\Phi} \times \delta_{\Phi}$.

\begin{definition}
	Let $(G_{1},\delta_{\Phi_{1}},\cdot_{1})$ and $(G_{2},\delta_{\Phi_{2}},\cdot_{2})$ be any descriptive proximal groups. Then $\eta : G_{1} \rightarrow G_{2}$ is called a homomorphism of descriptive proximal groups provided that it is dpcont group homomorphism. Furthermore, $\eta$ is called an isomorphism of descriptive proximal groups if it is a group isomorphism and also a descriptive proximal isomorphism. 
\end{definition}

\begin{example}
	Let $(Y_{1},\delta_{\Phi_{1}},+)$ and $(Y_{2},\delta_{\Phi_{2}},+)$ be any proximal groups. Then define a dpcont map
	\begin{eqnarray*}
		\nu : (Y_{1},\delta_{\Phi_{1}},+) \times (Y_{2},\delta_{\Phi_{2}},+) \rightarrow (Y_{1},\delta_{\Phi_{1}},+)
	\end{eqnarray*} 
    by $\nu(y_{1},y_{2}) = y_{1}$. $\nu$ is a homomorphism of descriptive proximal groups. Indeed, for $B_{1} \times B_{2}$, $C_{1} \times C_{2} \in 2^{Y_{1} \times Y_{2}}$,
    \begin{eqnarray*}
    	\nu[(B_{1} \times B_{2}) + (C_{1} \times C_{2})] &=& \nu[(B_{1}+C_{1}),(B_{2}+C_{2})] \\ 
    	&=& B_{1} + C_{1}\\
    	&=& \nu(B_{1} \times B_{2}) + \eta(C_{1} \times C_{2}).
    \end{eqnarray*}
    However, $\nu$ is not an isomorphism of descriptive proximal groups: For any $B_{1}$, $C_{1} \in 2^{Y_{1}}$, if $B_{1} \ \delta_{\Phi_{1}} \ C_{1}$ and $B_{2} \ \underline{\delta_{\Phi_{2}}} \ C_{2}$, then $B_{1} \times B_{2}$ cannot be descriptively near $C_{1} \times C_{2}$. 
\end{example}

\section{Conclusion}
\label{conc:5}
\quad The study of topological groups in proximity and descriptive proximity spaces marks an important step in the advancement of nearness theory, providing a fresh perspective on the interplay between algebraic and topological structures. As we continue to explore the potential of nearness-based settings, this research opens up new avenues for future investigations, promising further advancements and exciting discoveries in the fascinating realm of topological groups.

\quad It is necessary to mention an open problem on isomorphism theorems for groups. Intuitively, it can be thought that the first and second isomorphism theorem is not satisfied, but the third isomorphism theorem is satisfied, just as in proximity spaces. Making this clear with examples or proofs is to take the matter one step further. As another open problem, Lie groups setting in the theory of proximity (or descriptive proximity) can be considered. However, for this problem, first of all, the concept of a manifold and its related invariants should be studied extensively in the theory of nearness. As a result, it is very possible to obtain interesting results using the (descriptive) proximal group results on descriptive proximity theory.

\acknowledgment{The Scientific and Technological Research Council of Turkey TÜBİTAK-1002-A supported this investigation under project number 122F454.}

\end{document}